 \newtheorem{thm}{Theorem}[section]
 \newtheorem{cor}[thm]{Corollary}
 \newtheorem{lem}[thm]{Lemma}
 \theoremstyle{definition}
 \newtheorem{defn}[thm]{Definition}
 \newtheorem{rem}[thm]{Remark}
 \numberwithin{equation}{section}
\theoremstyle{definition}
\theoremstyle{remark}
\begin{document}

\title{Criticality of the Axially Symmetric \\
Navier-Stokes Equations}

\author{ Zhen Lei\footnote{School of Mathematics, Fudan University, Shanghai 200433, P.
 R. China.}\and Qi S. Zhang\footnote{Department of Mathematics,  University of California,
Riverside, CA 92521} }

\date{}

\maketitle

\begin{abstract}
Smooth solutions to the axi-symmetric Navier-Stokes equations obey the following maximum principle: $$\sup_{t\geq 0}\|rv^\theta(t, \cdot)\|_{L^\infty} \leq \|rv^\theta(0, \cdot)\|_{L^\infty}.$$ We prove that all solutions with initial data in $H^{\frac{1}{2}}$ is smooth globally in time if $rv^\theta$ satisfies a kind of Form Boundedness Condition (FBC) which is invariant under the natural scaling of the Navier-Stokes equations. In particular, if $rv^\theta$ satisfies
\begin{equation}\nonumber
\sup_{t \geq 0}|rv^\theta(t, r, z)| \leq C_\ast|\ln r|^{- 2},\ \ r \leq \delta_0 \in (0, \frac{1}{2}),\ C_\ast < \infty,
\end{equation}
then our FBC is satisfied. Here $\delta_0$ and $C_\ast$ are independent of neither 
the profile nor the norm of the initial data. So the gap from regularity is logarithmic in nature.
We also prove the global regularity of solutions if 
$\|rv^\theta(0, \cdot)\|_{L^\infty}$ or 
$\sup_{t \geq 0}\|rv^\theta(t, \cdot)\|_{L^\infty(r \leq r_0)}$ is 
small but the smallness depends on certain dimensionless quantity of the initial data.
\end{abstract}

\section{Introduction}

The global regularity problem of 3D incompressible Navier-Stokes equations is commonly considered as supercritical because the \textit{a priori} estimates based on energy equality become worse when looking into finer and finer scales (for instance, see Tao \cite{Tao}). Such a "super-
criticality" barrier is one of the main reasons why this problem is such a hard problem.

Recently, the axi-symmetric Navier-Stokes equations have attracted tremendous interests of experts. See, for instance, \cite{BZ:1, CL, CSTY1, CSTY2, CFZ, HL, HLL, JX, KNSS, LNZ, LZ, LZ-2, LMNP, NP, NP-2, Pan, SS, TX, ZZ} and many others. These results heavily depend on the maximum principle of the dimensionless quantity $\Gamma = rv^\theta$, which makes the axi-symmetric Naiver-Stokes equations partially critical (only the swirl component $v^\theta$ of the velocity field satisfies a dimensionless \textit{a priori} estimate). Although the axially symmetric Navier-Stokes equation is a special case of the full three-dimensional one,  our  level  of understanding  had  been  roughly  the  same,  with  essential
difficulty unresolved because the effective \textit{a priori} bound available is still the
energy estimate, which has a positive dimension $\frac{1}{2}$.   

The aim of this article is to show that the axi-symmetric Naiver-Stokes equation
 is in fact fully critical. More precisely, we prove that all solutions with initial data in $H^{\frac{1}{2}}$ are smooth globally in time if $rv^\theta$ satisfies a kind of Form Boundedness Condition (FBC) which is invariant under the natural scaling of the Navier-Stokes equations. In particular, if $rv^\theta$ satisfies
\begin{equation}\nonumber
\sup_{t \geq 0}|rv^\theta(t, r, z)| \leq C_\ast|\ln r|^{- 2},\ \ r \leq \delta_0 \in (0, \frac{1}{2}),\ C_\ast < \infty,
\end{equation}
then our FBC is satisfied. Here $\delta_0$ and $C_\ast$ are independent of neither the profile 
nor the norm of the initial data.  The proof is based on the observation that the vorticity equations 
can be transformed into a system such that the vortex stretching terms are critical. This means 
that the potentials in front of unknown functions scale as $1/|x|^2$. For example, in (\ref{eqJOmg})
below, the function $J$ is a regarded as unknown and the potential in front of it is $-2 \frac{v^{\theta}}{r}$
which scales as $1/|x|^2$.

 We also prove the global regularity of solutions if 
$\sup_{t \geq 0}\|rv^\theta(t, \cdot)\|_{L^\infty(r \leq r_0)}$ or $\|rv^\theta(0, \cdot)\|_{L^\infty}$  is small but the smallness depends on certain dimensionless quantity of the initial data. Our work is inspired by the recent interesting result of Chen-Fang-Zhang in \cite{CFZ} where, among other things, global regularity is obtained  if $rv^\theta(t, \cdot, z)$ is H${\rm \ddot{o}}$lder continuous in $r$ variable.

To state our result more precisely, let us recall that in cylindrical coordinates $r,\theta, z$ with
$(x_1,x_2,x_3)=(r\cos\theta,r\sin\theta,z)$, axially symmetric
solutions of the Navier-Stokes equations are of the following form
\begin{align*}
\begin{cases}
v(t, x)=v^r(t, r, z)e_r + v^{\theta}(t, r, z) e_{\theta} +v^z(t, r, z) e_z,\\[-4mm]\\
p(t, x) = p(t, r, z).
\end{cases}
\end{align*}
The components $v^r,v^{\theta},v^z$
 are all independent of the angle of rotation $\theta$.
 Here $e_r, e_{\theta}, e_z$ are the basis vectors for
 $\mathbb{R}^3$ given by
\begin{align*}
e_r = \Big (\frac{x_1}{r},\frac{x_2}{r},0 \Big )^\top, \ e_{\theta}=
\Big (\frac{-x_2}{r},\frac{x_1}{r},0 \Big )^\top, \ e_z
=(0,0,1)^\top.
\end{align*}
In terms of $(v^r, v^\theta, v^z, p)$, the axi-symmetric Navier-Stokes equations are
\begin{align}\label{ANS}
\begin{cases}
\partial_tv^r + (v^re_r + v^ze_z)\cdot\nabla v^r - \frac{(v^\theta)^2}{r} + \partial_rp = (\Delta - \frac{1}{r^2})v^r,\\[-4mm]\\
\partial_tv^\theta + (v^re_r + v^ze_z)\cdot\nabla v^\theta + \frac{v^rv^\theta}{r} = (\Delta - \frac{1}{r^2})v^\theta,\\[-4mm]\\
\partial_tv^z + (v^re_r + v^ze_z)\cdot\nabla v^z +  \partial_zp = \Delta v^z,\\[-4mm]\\
\partial_rv^r + \frac{v^r}{r} + \partial_zv^z = 0.
\end{cases}
\end{align}
It is well-known that finite energy smooth solutions of the Navier-Stokes equations satisfy the following energy identity
\begin{equation}\label{NI}
\|v(t, \cdot)\|_{L^2}^2 + 2\int_0^t\|\nabla v(s, \cdot)\|_{L^2}^2ds = \|v_0\|_{L^2}^2,\quad \forall\ \ t \geq 0.
\end{equation}

Denote
\begin{equation}\nonumber
\Gamma = rv^\theta.
\end{equation}
One can easily check that
\begin{equation}\label{Max}
\partial_t\Gamma + (v^re_r + v^ze_z)\cdot\nabla \Gamma = (\Delta - \frac{2}{r}\partial_r)\Gamma.
\end{equation}
A significant consequence of \eqref{Max} is that smooth solutions of the axi-symmetric Navier-Stokes equations satisfy the following maximum principle (see, for instance, \cite{CL, HL, CSTY1, NP, NP-2}):
\begin{equation}\label{MaxP}
\sup_t\|\Gamma(t, \cdot)\|_{L^\infty} \leq \|\Gamma_0\|_{L^\infty}.
\end{equation}
We emphasis that $\|\Gamma(t, \cdot)\|_{L^\infty}$ is a dimensionless quantity with respect to the natural scaling of the Navier-Stokes equations.  From this point of view, the axi-symmetric Naiver-Stokes equations can be seen as partially critical, while the general Navier-Stokes equations are known to be supercritical (see \cite{Tao}).

Now let us introduce the function class where $v^\theta$ lives. It is defined in an integral way which is usually called Form Boundedness Condition (FBC), which is in spirit similar to a condition so that certain Hardy type inequality holds.

\begin{defn}\label{FBC}
We say that the angular velocity $v^\theta(t, r, z)$ is in $(\delta_\ast, C_\ast)$-critical class if
\begin{equation}\label{FBC-1}
\int \frac{|v^\theta|}{r}|f|^2dx \leq C_\ast\int|\partial_rf|^2dx + C_0\int_{r \geq r_0}|f|^2dx,
\end{equation}
\begin{equation}\label{FBC-2}
\int |v^\theta|^2|f|^2dx \leq \delta_\ast\int|\partial_rf|^2dx + C_0\int_{r \geq r_0}|f|^2dx,
\end{equation}
holds for some $r_0 > 0$, some $C_0 > 0$ and for all $t \geq 0$ and all axi-symmetric scalar and vector functions $f \in H^1$.
\end{defn}
Clearly, under the natural scaling of the Navier-Stokes equations:
$$v^\lambda(t, x) = \lambda v(\lambda^2t, \lambda x),\quad p^\lambda(t, x) = \lambda^2 p(\lambda^2t, \lambda x),$$
the above definition of FBC is invariant: $(v^\lambda)^\theta$ also satisfies \eqref{FBC-1}-\eqref{FBC-2} if $v^\theta$ does.  Below is the first result of this article:
\begin{thm}\label{thm}
There exists a constant $\delta_\ast > 0$ such that for arbitrary $C_\ast > 1$, all local strong solutions to the axially symmetric Navier-Stokes equations with initial data $\|v_0\|_{H^{\frac{1}{2}}} < \infty$ and $\|\Gamma_0\|_{L^\infty} < \infty$, if the angular velocity field $v^\theta$ is in $(\delta_\ast, C_\ast)$-critical class, i.e. $v^\theta$ satisfies the critical Form Boundedness Condition in \eqref{FBC-1}-\eqref{FBC-2},
then $v$ is regular globally in time.
\end{thm}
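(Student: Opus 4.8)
The plan is to run a bootstrap argument on the vorticity system, using the Form Boundedness Condition to absorb the vortex stretching terms into the dissipation, and the energy identity \myref{NI} together with the maximum principle \myref{MaxP} as the only background a priori bounds. First I would recall the axi-symmetric vorticity formulation: writing $\omega^\theta$ for the angular vorticity, the decisive quantities are $\Omega = \omega^\theta/r$ and $J = \omega^r/r$ (and $\omega^z$), whose equations contain vortex stretching terms carrying the coefficient $v^\theta/r$ — precisely the factor appearing in \myref{FBC-1} — and quadratic terms involving $v^\theta$ itself, matching \myref{FBC-2}. As the introduction indicates, the key algebraic point is that the transformed system has stretching potentials scaling like $1/|x|^2$, so that multiplying the evolution equations by the natural weights and integrating by parts produces terms of exactly the form $\int \frac{|v^\theta|}{r}|f|^2\,dx$ and $\int |v^\theta|^2|f|^2\,dx$ with $f$ equal to $\Omega$, $J$, or $\sqrt{\text{something}}$; here I would be careful to track which components need the linear Hardy-type bound \myref{FBC-1} and which need the small-coefficient quadratic bound \myref{FBC-2}.

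Second, I would set up a weighted energy estimate: test the $\Omega$-equation against $\Omega$ (in the $L^2(dx)$ or the $L^2(r\,dr\,dz)$ inner product, whichever keeps the dissipation $-\Delta + \text{l.o.t.}$ coercive), getting
\begin{equation}\nonumber
\frac{1}{2}\frac{d}{dt}\|\Omega\|_{L^2}^2 + \|\nabla\Omega\|_{L^2}^2 \lesssim \int \frac{|v^\theta|}{r}|\nabla\Omega|\,|\Omega|\,dx + (\text{coupling with } J,\ \omega^z).
\end{equation}
By Cauchy–Schwarz the right side is bounded by $\frac{1}{4}\|\nabla\Omega\|_{L^2}^2 + C\int \frac{|v^\theta|}{r}|\Omega|^2\,dx$ after relabeling, and then \myref{FBC-1} with test function $f = \Omega$ gives $C\int\frac{|v^\theta|}{r}|\Omega|^2 \le CC_\ast\|\partial_r\Omega\|_{L^2}^2 + CC_0\int_{r\ge r_0}|\Omega|^2$. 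The first term is NOT absorbable (since $C_\ast$ is large), so this naive splitting fails; instead one must exploit the quadratic condition \myref{FBC-2} where the coefficient $\delta_\ast$ is small — this is why the theorem fixes $\delta_\ast$ small but allows $C_\ast$ arbitrary. Concretely, I expect one writes the dangerous stretching term with one factor of $v^\theta$ and one factor of $1/r$ split differently, e.g. pairing $v^\theta \cdot (v^\theta/r)\cdot(\text{derivative})$ via the structure of the $\omega^r, \omega^z$ equations, so that what actually appears is $\int|v^\theta|^2|f|^2$ controlled by $\delta_\ast\|\partial_r f\|^2$ with $\delta_\ast$ small enough to be absorbed, while the genuinely $C_\ast$-dependent pieces only ever appear multiplied against the already-controlled energy $\|\nabla v\|_{L^2}^2$ from \myref{NI}, contributing a Grönwall factor $\exp(C_\ast\int_0^t\|\nabla v\|_{L^2}^2\,ds) \le \exp(C_\ast\|v_0\|_{L^2}^2/2)$, which is finite. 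The far-field error terms $\int_{r\ge r_0}|f|^2$ are handled by the global energy bound since away from the axis the weights are comparable to the plain $L^2$ norm.

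Third, once a global bound on $\|\Omega\|_{L^2_t L^\infty \cap L^2_t \dot H^1}$ (or the analogous quantity) is in hand, I would close the argument by a standard regularity-criterion cascade: such control on $\omega^\theta/r$ is known (from the literature cited, e.g. \cite{CSTY1, CSTY2}) to imply global smoothness of axi-symmetric solutions, or alternatively one propagates the $H^{1/2}$ norm of $v$ directly via the mild formulation once the scaling-critical stretching has been tamed. The subcritical pieces and the $\omega^z$, $\omega^r$ components are then routine because $\Gamma \in L^\infty$ controls the linear-in-$v^\theta$ source terms there.

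The main obstacle, I expect, is the middle step: finding the correct choice of test functions / weighted quantities so that every occurrence of the large constant $C_\ast$ is attached to the energy-controlled quantity $\|\nabla v\|_{L^2}^2$ (harmless, via Grönwall) while only the small constant $\delta_\ast$ is attached to the top-order dissipation (so it can be absorbed) — in other words, disentangling the two FBC inequalities \myref{FBC-1}–\myref{FBC-2} and matching each to the precisely scaling-correct term in the vorticity system. A secondary technical point is justifying these manipulations at the level of the $H^{1/2}$ local strong solution (a priori the vorticity need not be in $L^2$ initially), which I would handle by first establishing the estimates for smooth approximations or on a short time interval where higher regularity is available, then passing to the limit.
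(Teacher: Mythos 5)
You have the right unknowns ($J=\omega^r/r$, $\Omega=\omega^\theta/r$), the right background bounds, and you correctly diagnose the central tension: $C_\ast$ is large, so \eqref{FBC-1} cannot simply be absorbed into the dissipation, while only $\delta_\ast$ in \eqref{FBC-2} is small. But the mechanism you propose to resolve this is not the one that works, and you do not supply a workable substitute, so there is a genuine gap at the heart of the argument. First, for the $\Omega$-equation the dangerous term is $-2\int\frac{v^\theta}{r}J\Omega\,dx$, and your suggestion that the $C_\ast$-dependent pieces can be shifted onto $\|\nabla v\|_{L^2}^2$ and handled by Gr\"onwall (a factor $\exp(C_\ast\|v_0\|_{L^2}^2)$) cannot be implemented: this term is critical, with both factors at the level of the (uncontrolled) vorticity, so it is not dominated by $\|\nabla v\|_{L^2}^2$ times an energy-controlled quantity. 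The paper instead splits it by a weighted Young inequality, $\bigl|\int\frac{v^\theta}{r}J\Omega\bigr|\le\frac{1}{4C_\ast}\int\frac{|v^\theta|}{r}\Omega^2+C_\ast\int\frac{|v^\theta|}{r}J^2$, applies \eqref{FBC-1} to both pieces, and then absorbs the resulting large term $C_\ast^2\|\partial_r J\|_{L^2}^2$ by adding $3C_\ast^2$ times the $J$-energy inequality to the $\Omega$-energy inequality; i.e.\ the system is closed by a $C_\ast$-weighted linear combination of the two estimates, not by a Gr\"onwall factor. Your pivot ``use \eqref{FBC-2} instead of \eqref{FBC-1} for this term'' points in the wrong direction: \eqref{FBC-1} is exactly what is used here.

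Second, and more seriously, you never identify where \eqref{FBC-2} actually enters, nor the key lemma that makes it effective. In the paper the $J$-equation stretching term is rewritten as $\int J(\omega^r\partial_r+\omega^z\partial_z)\frac{v^r}{r}\,dx=\int[\nabla\times(v^\theta e_\theta)]\cdot\bigl(J\nabla\frac{v^r}{r}\bigr)dx\le\|\nabla J\|_{L^2}\,\|v^\theta\nabla\frac{v^r}{r}\|_{L^2}$, and \eqref{FBC-2} is applied with the test function $f=\nabla\frac{v^r}{r}$; the output $\delta_\ast\|\partial_r\nabla\frac{v^r}{r}\|_{L^2}^2$ is then converted, via Lemma \ref{Lem1} ($\|\nabla^2\frac{v^r}{r}\|_{L^2}\le K_0\|\partial_z\Omega\|_{L^2}$), into $K_0\delta_\ast\|\partial_z\Omega\|_{L^2}^2$, which the $\Omega$-dissipation absorbs. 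Without this rewriting and without Lemma \ref{Lem1} (originating in \cite{HLL, Lei-mhd, MZ}), your ``absorb into the dissipation'' step has nothing to absorb into, since $\partial_r\nabla\frac{v^r}{r}$ is not part of the dissipation of the $(J,\Omega)$ system. Your treatment of the far-field terms and of the initial-regularity issue (starting from a slightly later time) matches the paper, and your endgame—passing from the resulting bound on $\Omega$ to full regularity via known criteria—is acceptable in spirit (the paper either cites \cite{NP} or argues self-containedly through $\|\frac{v^r}{r}\|_{L^\infty}\lesssim\|\Omega\|_{L^2}^{1/2}\|\partial_z\Omega\|_{L^2}^{1/2}$, an $L^4$ bound for $v^\theta$, $\omega^\theta\in L^2$, and a Serrin criterion), but the core a priori estimate as you sketch it would not close.
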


An important corollary of Theorem \ref{thm} is:
\begin{cor}\label{cor}
Let $\delta_0 \in (0, \frac{1}{2})$ and $C_1 > 1$. Let $v$ be the local strong solution of the axially symmetric Navier-Stokes equations with initial data $v_0  \in H^{\frac{1}{2}}$ and $\|\Gamma_0\|_{L^\infty} < \infty$.
If
\begin{equation}\label{CD}
\sup_{0 \leq t < T}|\Gamma(t, r, z)| \leq C_1|\ln r|^{- 2},\ \ r \leq \delta_0,
\end{equation}
then $v$ is regular globally in time.
\end{cor}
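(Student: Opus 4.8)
\medskip
\noindent\textbf{Proof proposal for Corollary~\ref{cor}.}
The plan is to deduce the statement from Theorem~\ref{thm}: I will show that the pointwise bound \eqref{CD} forces $v^\theta$ into the $(\delta_\ast,C_\ast)$-critical class for the universal constant $\delta_\ast$ of Theorem~\ref{thm} and for, say, $C_\ast=5C_1$ (which is $>1$ since $C_1>1$). First, since on the existence interval $[0,T)$ the solution is smooth, the maximum principle \eqref{MaxP} gives $\|\Gamma(t,\cdot)\|_{L^\infty}\le\|\Gamma_0\|_{L^\infty}$ for all $t<T$; combined with \eqref{CD} this provides, for every $t\in[0,T)$ and every $\delta\in(0,\delta_0]$, the two bounds
\[
|v^\theta(t,r,z)|\le C_1\,\frac{|\ln r|^{-2}}{r}\quad(r\le\delta_0),\qquad |v^\theta(t,r,z)|\le\frac{\|\Gamma_0\|_{L^\infty}}{\delta}\quad(r\ge\delta).
\]
All constants produced below will be uniform in $t$, so it suffices to verify \eqref{FBC-1}--\eqref{FBC-2} at a fixed time.

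The analytic tool I would use is a logarithmic Hardy inequality in the cylindrical radius. Exploiting $\frac{d}{dr}\big(\frac{1}{-\ln r}\big)=\frac{1}{r(\ln r)^2}$ together with $dx=r\,dr\,d\theta\,dz$, an integration by parts on $(0,\delta)$ (for $\delta<1$) followed by Young's inequality should give, for axisymmetric $f$,
\[
\int_{r\le\delta}\frac{|\ln r|^{-2}}{r^2}\,|f|^2\,dx\ \le\ 4\int_{r\le\delta}|\partial_r f|^2\,dx\ +\ \frac{C(\delta)}{|\ln\delta|}\int_{r=\delta}|f|^2\,dS ,
\]
and the trace term I would absorb via the elementary identity $f(\delta,z)^2=f(s,z)^2+\int_s^\delta\partial_r(f^2)\,dr$, averaged over $s\in(\delta/2,\delta)$, which yields, for every $\lambda>0$,
\[
\int_{r=\delta}|f|^2\,dS\ \le\ \lambda\int|\partial_r f|^2\,dx\ +\ C(\delta,\lambda)\int_{r\ge\delta/2}|f|^2\,dx .
\]
(The vector case reduces to the scalar one applied to each cylindrical component, since for axisymmetric $f$ one has $|f|^2=|f^r|^2+|f^\theta|^2+|f^z|^2$ and $|\partial_r f|^2=|\partial_r f^r|^2+|\partial_r f^\theta|^2+|\partial_r f^z|^2$.)

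Granting this, I would verify \eqref{FBC-1} by splitting $\int\frac{|v^\theta|}{r}|f|^2\,dx$ at $r=\delta_0$: on $r\ge\delta_0$ the weight $\frac{|v^\theta|}{r}=\frac{|\Gamma|}{r^2}$ is bounded by $\|\Gamma_0\|_{L^\infty}\delta_0^{-2}$, and on $r\le\delta_0$ one has $\frac{|v^\theta|}{r}\le C_1\frac{|\ln r|^{-2}}{r^2}$, so the log‑Hardy inequality (taking $\lambda$ small in the trace bound) gives \eqref{FBC-1} with $C_\ast=5C_1$ and an $r_0,C_0$ depending only on $\delta_0$ and $\|\Gamma_0\|_{L^\infty}$. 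For \eqref{FBC-2} the key is $|v^\theta|^2\le C_1^2\frac{|\ln r|^{-4}}{r^2}=C_1^2|\ln r|^{-2}\cdot\frac{|\ln r|^{-2}}{r^2}\le\frac{C_1^2}{|\ln\delta_1|^2}\cdot\frac{|\ln r|^{-2}}{r^2}$ on $r\le\delta_1$, valid for any $\delta_1\le\delta_0$. I would first choose $\delta_1=\delta_0\wedge\exp(-C_1\sqrt{8/\delta_\ast})$ so that $8C_1^2/|\ln\delta_1|^2\le\delta_\ast$, and then $\lambda$ small enough: the log‑Hardy inequality then bounds the portion of $\int|v^\theta|^2|f|^2\,dx$ over $r\le\delta_1$ by $\delta_\ast\int|\partial_r f|^2\,dx+C_0\int_{r\ge\delta_1/2}|f|^2\,dx$, while the portion over $r\ge\delta_1$ has the bounded weight $\|\Gamma_0\|_{L^\infty}^2\delta_1^{-2}$ and enters the same lower‑order term. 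Taking a common $r_0=\delta_1/2$ and $C_0$ equal to the maximum of the finitely many constants produced, $v^\theta$ lies in the $(\delta_\ast,C_\ast)$-critical class and Theorem~\ref{thm} yields the global regularity of $v$.

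The hard part, and essentially the only place the precise hypothesis is used, is the sharpness bookkeeping in the previous step. The weight $r^{-2}|\ln r|^{-2}$ is exactly the two‑dimensional Hardy borderline, so \eqref{FBC-1} only just closes and only with a finite (not small) constant, whereas \eqref{FBC-2} needs a genuine smallness gain; this is where the exponent $2$ in $|\Gamma|\le C_1|\ln r|^{-2}$ is essential, because squaring $v^\theta$ produces an extra factor $|\ln r|^{-2}\le|\ln\delta_1|^{-2}$ which tends to $0$ as the localization radius $\delta_1\to0$, while any exponent $<2$ would leave a weight no better than the borderline and the scheme would collapse. A secondary technical point I would have to handle with care is that the boundary/trace terms must be reabsorbed with an arbitrarily small coefficient in front of $\int|\partial_r f|^2\,dx$, which is why the free parameter $\lambda$ is kept throughout.
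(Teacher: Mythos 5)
Your proposal is correct and follows essentially the same route as the paper: verify \eqref{FBC-1}--\eqref{FBC-2} via the borderline logarithmic Hardy inequality based on $\frac{d}{dr}\big(|\ln r|^{-1}\big)=\frac{1}{r(\ln r)^2}$, split at a small radius, gain the required smallness in \eqref{FBC-2} from the extra factor $|\ln\delta_1|^{-2}$, and then invoke Theorem~\ref{thm}. The only difference is technical bookkeeping: you localize with a sharp cutoff and reabsorb the resulting trace term at $r=\delta$ by an averaging argument with a free small parameter, whereas the paper uses a smooth cutoff $\phi(r/\delta)$ so that no boundary term appears and the cutoff-derivative contribution is dumped into the lower-order term $C\delta^{-2}\int_{r\ge\delta}|f|^2\,dx$.
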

We emphasis that $C_\ast$ in Theorem \ref{thm} and $C_1$ in Corollary \ref{cor} are independent of neither the profile nor the norm of the given initial data. The proof of this Corollary will be given at the end of  section 2. The point is that if \eqref{CD} is satisfied, then the FBC of \eqref{FBC-1}-\eqref{FBC-2} is true. Then one can apply Theorem \ref{thm} to get the desired conclusion.

Our work is inspired by a recent very interesting work by Chen, Fang and Zhang \cite{CFZ} where, among other things, the authors proved that $v$ is regular if $\Gamma$ is H${\rm \ddot{o}}$lder continuous. Let
\begin{equation}\nonumber
\Omega = \frac{\omega^\theta}{r},\quad J = - \frac{\partial_zv^\theta}{r}.
\end{equation}
We emphasis that $J$ was introduced by Chen-Fang-Zhang in \cite{CFZ}, while $\Omega$ appeared much earlier and can be at least tracked back to the book of Majda-Bertozzi in \cite{MB}. Both of the two new variables are of great importance in our work. Following \cite{MB, HL, CFZ},  we also study the equations for $J$ and $\Omega$:
\begin{align}
\label{eqJOmg}
\begin{cases}
\partial_tJ + (b\cdot\nabla)J =  \big (\Delta + \frac{2}{r}\partial_r \big
)J + (\omega^r\partial_r + \omega^z\partial_z)\frac{v^r}{r},\\[-4mm]\\
\partial_t\Omega + (b\cdot\nabla)\Omega =   \big  (\Delta + \frac{2}{r}\partial_r  \big
)\Omega - 2\frac{v^\theta}
{r}J.
\end{cases}
\end{align}
Here $\omega^\theta$ is the angular component of the vorticity $\omega = \nabla\times v$, which reads
 \begin{align*}
\omega(t, x) =\omega^re_r + \omega^\theta
e_{\theta} + \omega^ze_z,
\end{align*}
with
\begin{equation}\nonumber
\omega^r =  - \partial_z v^\theta,\quad \omega^{\theta} =\partial_z v^r - \partial_r
v^z,\quad \omega^z = \partial_r v_{\theta} + \frac{v^\theta}{r}.
\end{equation}
Our new observation is that the axi-symmetric Navier-Stokes equations exhibit certain critical nature when being formulated in terms of a new set of unknowns $J$ and $\Omega$.
Our second observation is that, with the FBC assumptions \eqref{FBC-1}-\eqref{FBC-2}, the stretching term $(\omega^r\partial_r + \omega^z\partial_z)\frac{v^r}{r}$ in the equation of $J$ could be arbitrarily small, by using the relation of $v^r, v^z$ and $\Omega$ in Lemma \ref{Lem1} (which was originally proved by Hou, Lei and Li \cite{HLL} in the periodic case and later on extended to the general case by Lei \cite{Lei-mhd}. Alternatively, one may also use the magic formula given by Miao and Zheng in \cite{MZ} to prove it). Then we can derive a closed \textit{a priori} estimate for $J$ and $\Omega$ using the first two observations and the structure of the stretching term in $\Omega$-equation.

Our second goal is to prove that the smallness of $\sup_{t \geq 0}\|\Gamma(t, \cdot)\|_{L^\infty(r \leq r_0)}$ or $\|\Gamma_0\|_{L^\infty}$ implies the global regularity of the solutions. Recently, Chen-Fang-Zhang in \cite{CFZ} proved that, among many other interesting results, if $\Gamma(t, \cdot, z)$ is H${\rm \ddot{o}}$lder continuous  in $r$-variable, then the solution of the axi-symmetric Navier-Stokes equations is smooth. Both of the results depend on given initial data. More precisely, the smallness of $\sup_{t \geq 0}\|\Gamma(t, \cdot)\|_{L^\infty(r \leq r_0)}$ or $\|\Gamma_0\|_{L^\infty}$  in our Theorem \ref{SmallnessRegu} depends on other dimensionless norms of the initial data. From this point of view, our result improves the one in \cite{CFZ}.

We define
$$V = \frac{v^\theta}{\sqrt{r}}.$$
Here is the second main result.
\begin{thm}\label{SmallnessRegu}
Let $r_0 > 0$. Suppose that $v_0 \in H^{\frac{1}{2}}$ such that
$\Omega_0 \in L^2$, $V_0^2 \in L^2$ and $\Gamma_0 \in L^2\cap
L^\infty$. Denote
\begin{equation}\nonumber
\big(\|\Omega_0\|_{L^2} + \|V_0^2\|_{L^2}\big)\|\Gamma_0\|_{L^2} = M_0
\end{equation}
and
$$\big(\|V_0^2\|_{L^2} +
  \|\Omega_0\|_{L^2} + r_0^{-2}\|v_0\|_{L^2}\|\Gamma_0\|_{L^\infty}^{\frac{3}{2}}\big)\|\Gamma_0\|_{L^2} = M_1.$$
There exists an absolute positive (small) constant $\delta > 0$ such
that if either
\begin{equation}\nonumber
\|\Gamma_0\|_{L^\infty}  \leq \delta M_0^{-1},
\end{equation}
or
\begin{equation}\nonumber
\sup_{t \geq 0}\|\Gamma(t, \cdot)\|_{L^\infty(r \leq r_0)}  \leq \delta M_1^{-1},
\end{equation}
then the axially symmetric Navier-Stokes equations are globally
well-posed.
\end{thm}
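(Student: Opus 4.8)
The plan is to reduce the two alternative smallness hypotheses to a single \emph{a priori} estimate and then close it by a continuity argument. Since $v_0\in H^{\frac12}$ there is a local strong solution on a maximal interval $[0,T)$, and by the known regularity criteria for axially symmetric flows it suffices to bound $\|\Omega(t)\|_{L^2}^2+\|V^2(t)\|_{L^2}^2$ on $[0,T)$, where $\Omega=\omega^\theta/r$ and $V^2=(v^\theta)^2/r$. Throughout I would use three free pieces of information: the energy identity \eqref{NI}; the maximum principle \eqref{MaxP}; and, testing \eqref{Max} against $\Gamma$ (the boundary contributions at $r=0$ and $r=\infty$ vanish since $\Gamma=rv^\theta$), the dissipative identity $\tfrac12\tfrac{d}{dt}\|\Gamma\|_{L^2}^2+\|\nabla\Gamma\|_{L^2}^2=0$, hence $\|\Gamma(t)\|_{L^2}\le\|\Gamma_0\|_{L^2}$ and $\int_0^T\|\nabla\Gamma\|_{L^2}^2\,ds\le\tfrac12\|\Gamma_0\|_{L^2}^2$.

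Next I would set up the coupled energy estimate for $(\Omega,V^2)$. Testing the $\Omega$-equation in \eqref{eqJOmg} against $\Omega$, using $\nabla\cdot b=0$ and the dissipativity of $\Delta+\tfrac2r\partial_r$ on axially symmetric functions (integration by parts yields $-\|\nabla\Omega\|_{L^2}^2$ plus a favourable boundary term at $r=0$), one obtains $\tfrac12\tfrac{d}{dt}\|\Omega\|_{L^2}^2+\|\nabla\Omega\|_{L^2}^2\le -2\int\tfrac{v^\theta}{r}J\,\Omega\,dx$. The key algebraic point is that $\tfrac{v^\theta}{r}J=-\tfrac1{2r^2}\partial_z\big((v^\theta)^2\big)$, so integrating by parts in $z$ converts the right side into $-\int\tfrac{V^2}{r}\,\partial_z\Omega\,dx\le\tfrac12\|\nabla\Omega\|_{L^2}^2+\tfrac12\int\tfrac{V^4}{r^2}\,dx$. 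On the other side, multiplying the $v^\theta$-equation in \eqref{ANS} by $2v^\theta/r$ and simplifying gives
\begin{equation}\nonumber
\partial_t V^2+(b\cdot\nabla)V^2+\frac{3v^r}{r}V^2=\Big(\Delta+\frac2r\partial_r\Big)V^2-\frac{V^2}{r^2}-\frac{2|\nabla v^\theta|^2}{r};
\end{equation}
testing against $V^2$ (again $\nabla\cdot b=0$ kills the transport term and the heat operator produces $-\|\nabla V^2\|_{L^2}^2$) yields $\tfrac12\tfrac{d}{dt}\|V^2\|_{L^2}^2+\|\nabla V^2\|_{L^2}^2+\int\tfrac{V^4}{r^2}\,dx+2\int\tfrac{|\nabla v^\theta|^2}{r}V^2\,dx\le -3\int\tfrac{v^r}{r}V^4\,dx$. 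Adding the two estimates, the $\tfrac12\int V^4/r^2$ produced by the stretching term in the $\Omega$-equation is absorbed by the $\int V^4/r^2$ generated on the good side of the $V^2$-equation, leaving
\begin{equation}\nonumber
\frac{d}{dt}\big(\|\Omega\|_{L^2}^2+\|V^2\|_{L^2}^2\big)+\|\nabla\Omega\|_{L^2}^2+\|\nabla V^2\|_{L^2}^2+\int\frac{V^4}{r^2}\,dx+4\int\frac{|\nabla v^\theta|^2}{r}V^2\,dx\le C\Big|\int\frac{v^r}{r}V^4\,dx\Big|.
\end{equation}

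The crux is the single surviving term $\int\tfrac{v^r}{r}V^4\,dx$. Here Lemma \ref{Lem1} enters decisively: it expresses $v^r/r$ through the scalar $\Omega$ alone, in particular $\|v^r/r\|_{L^6}\lesssim\|\Omega\|_{L^2}$. Splitting $V^4=V^2\cdot V^2$, Hölder, the Sobolev embedding $\|V^2\|_{L^6}\lesssim\|\nabla V^2\|_{L^2}$, interpolation and Young's inequality then produce a bound of the schematic form
\begin{equation}\nonumber
\Big|\int\frac{v^r}{r}V^4\,dx\Big|\le\frac1{2C}\big(\|\nabla\Omega\|_{L^2}^2+\|\nabla V^2\|_{L^2}^2\big)+C'\,\mathcal N(t)\,\big(\|\Omega\|_{L^2}^2+\|V^2\|_{L^2}^2\big),
\end{equation}
where $\mathcal N(t)$ is a product of solution norms which, after invoking the maximum principle $\|\Gamma(t)\|_{L^\infty}\le\|\Gamma_0\|_{L^\infty}$ and the $L^2$-decay $\|\Gamma(t)\|_{L^2}\le\|\Gamma_0\|_{L^2}$ to extract a factor comparable to $\|\Gamma_0\|_{L^\infty}\|\Gamma_0\|_{L^2}$, is small precisely by the hypothesis on $\|\Gamma_0\|_{L^\infty}$. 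For the localized alternative I would first split the integral into $\{r\le r_0\}$, where $\sup_t\|\Gamma(t,\cdot)\|_{L^\infty(r\le r_0)}$ supplies the smallness, and $\{r\ge r_0\}$, where $\tfrac1r\le\tfrac1{r_0}$ turns the singular weights into $r_0^{-2}$-type constants absorbed using only \eqref{NI} and \eqref{MaxP} — this is the origin of the term $r_0^{-2}\|v_0\|_{L^2}\|\Gamma_0\|_{L^\infty}^{3/2}$ in $M_1$. In either case one arrives at $\tfrac{d}{dt}E(t)+c\,\big(\|\nabla\Omega\|_{L^2}^2+\|\nabla V^2\|_{L^2}^2\big)\le 0$ on any interval where the small parameter times $\|\Omega(t)\|_{L^2}+\|V^2(t)\|_{L^2}$ is at most $c$, with $E=\|\Omega\|_{L^2}^2+\|V^2\|_{L^2}^2$. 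A continuity/bootstrap argument then closes everything: on the maximal subinterval of $[0,T)$ where $E(t)\le 4E(0)$, the choice $\|\Gamma_0\|_{L^\infty}\le\delta M_0^{-1}$ (resp.\ $\le\delta M_1^{-1}$) with $\delta$ an absolute constant forces the feedback coefficient to be $\le c$ — the products $M_0,M_1$ are designed precisely so that this works dimensionlessly — whence $E$ is non-increasing, the threshold $4E(0)$ is never attained, and the subinterval is all of $[0,T)$; since $T$ was arbitrary the solution is global and smooth.

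I expect the main obstacle to be exactly the control of $\int\tfrac{v^r}{r}V^4\,dx$. This term is scaling-critical — it carries the same dimension as the dissipation $\|\nabla V^2\|_{L^2}^2$ and as $\int V^4/r^2$ — so parabolic smoothing alone provides no slack, and closing it requires simultaneously (i) Lemma \ref{Lem1} to trade $v^r/r$ for $\|\Omega\|_{L^2}$, (ii) the maximum principle and the $L^2$-decay of $\Gamma$ to extract the small parameter, and (iii) scrupulous tracking of all constants so that the threshold $\delta$ is genuinely absolute while the admissible size of $\|\Gamma_0\|_{L^\infty}$ is forced to scale like $M_0^{-1}$, resp.\ $M_1^{-1}$. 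The localized case adds the further bookkeeping of the $\{r\ge r_0\}$ contributions, which is what pins down the precise shape of $M_1$.
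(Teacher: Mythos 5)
Your overall strategy is the same as the paper's: reformulate the problem in terms of $\Omega$ and $V^2$ (your equation for $V^2$ is equivalent to the $L^4$ energy estimate for $V$ based on \eqref{WeiEqn}), integrate the stretching term $-2\frac{v^\theta}{r}J$ by parts in $z$, keep the Hardy-type term $\int V^4 r^{-2}dx$ on the left, reduce everything to the single critical term $\int\frac{v^r}{r}V^4dx$, close by a continuity argument, and for the second alternative split at $r=r_0$ and control the far region by the energy identity and the maximum principle, which is exactly how the term $r_0^{-2}\|v_0\|_{L^2}\|\Gamma_0\|_{L^\infty}^{3/2}$ in $M_1$ arises.

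However, at the crux term your sketch has a genuine gap, in two respects. First, the toolkit you name (Hölder after writing $V^4=V^2\cdot V^2$, the Sobolev bound $\|V^2\|_{L^6}\lesssim\|\nabla V^2\|_{L^2}$, interpolation of $V^2$ between $L^2$ and $L^6$, together with $\|v^r/r\|_{L^6}\lesssim\|\Omega\|_{L^2}$) yields a bound of the type $\|\Omega\|_{L^2}\|V^2\|_{L^2}^{3/2}\|\nabla V^2\|_{L^2}^{1/2}$, which contains no factor of $\Gamma$ at all, so the smallness hypothesis never enters and the resulting nonlinearity is supercritical. The $\Gamma$-factor must be extracted through the weighted pointwise factorization $V^4=\big(V^4/r^2\big)^{3/4}|\Gamma|$, i.e. $\|V\|_{L^4}^4\le\big\|r^{-1}V^2\big\|_{L^2}^{3/2}\|\Gamma\|_{L^4}\le\big\|r^{-1}V^2\big\|_{L^2}^{3/2}\|\Gamma\|_{L^2}^{1/2}\|\Gamma\|_{L^\infty}^{1/2}$, paired with $\|v^r/r\|_{L^\infty}\lesssim\|\Omega\|_{L^2}^{1/2}\|\partial_z\Omega\|_{L^2}^{1/2}$ (Lemma \ref{Lem1} plus $\|f\|_{L^\infty}^2\lesssim\|\nabla f\|_{L^2}\|\nabla^2 f\|_{L^2}$), so that after Young's inequality every factor except the coefficient $\big(\|\Omega\|_{L^2}\|\Gamma\|_{L^2}\|\Gamma\|_{L^\infty}\big)^{1/2}$ is a quantity sitting on your left-hand side, namely $\|\partial_z\Omega\|_{L^2}^2$ and the Hardy term $\|r^{-1}V^2\|_{L^2}^2$. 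Second, even granting a small coefficient, the schematic bound you display, $\frac{1}{2C}\big(\|\nabla\Omega\|_{L^2}^2+\|\nabla V^2\|_{L^2}^2\big)+C'\mathcal N(t)\big(\|\Omega\|_{L^2}^2+\|V^2\|_{L^2}^2\big)$ with $\mathcal N$ merely small and not time-integrable, cannot yield ``$E$ non-increasing'': it gives $E'\le C'\mathcal N E$, i.e. exponential growth at rate of order $\delta$, so the bootstrap threshold $4E(0)$ is reached in time of order $\delta^{-1}$ and no global statement follows. The estimate must absorb the bad term \emph{entirely} into the left-hand dissipation and Hardy terms, leaving no additive $\mathcal N E$ contribution; this is precisely what your concluding display $\frac{d}{dt}E+c\,D\le 0$ presupposes and what the weighted interpolation above delivers. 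With that replacement your continuity argument and the $r\ge r_0$ bookkeeping (bounding $r_0^{-4}\|\Gamma\|_{L^\infty}^3\|v^r/r\|_{L^2}\|v^\theta/r\|_{L^2}$ in time by the energy identity) go through and produce the stated $M_0$ and $M_1$.
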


The proof of Theorem \ref{SmallnessRegu} is based on a new formulation of the axi-symmetric Navier-Stokes equations \eqref{ANS} in terms of  $V$ and $\Omega = \frac{\omega^\theta}{r}$, and also on the estimate of $\frac{v^r}{r}$ in terms of $\Omega$ and its derivative (see Lemma \ref{Lem1}).

Now let us recall some highlights on the study of the axi-symmetric Navier-Stokes equations.
If the swirl $v_\theta = 0$, then it
is known since the late 1960's (see Ladyzhenskaya \cite{L},
Uchoviskii and Yudovich \cite{UY}), that finite energy
solutions to (\ref{ANS}) are smooth for all time. See also the
paper by Leonardi, Malek, Necas and Pokorny \cite{LMNP}. In the presence of swirl,
it is not known in general if finite energy solutions blow up or not in finite time. In \cite{HL}, Hou and Li constructed a family of large solutions based on some deep insights on a 1D model. See also some extended results in \cite{HLL} by  Hou, Lei and Li. We also mention various \textit{a priori} estimates of smooth solutions by Chae and Lee \cite{CL} and Burke-Zhang \cite{BZ:1}. To the best of our knowledge, the best a priori bound of velocity field is given in \cite{LNZ}:
$$|v(t, x)| \leq C_\ast r^{- 2}|\ln r|^{\frac{1}{2}}.$$

In \cite{CSTY1}, Chen, Strain, Tsai and Yau obtained a lower
bound for the possible blow up rate of singularties: if
$$|v(t, x)| \leq \frac{C_\ast}{r},$$
then $v$ is regular. This seems to be the first time that people can exclude possible singularities in the presence of assumptions on $|x|^{-1}$ type non-smallness quantities. Soon later, Chen, Strain, Yau  and Tsai \cite{CSTY2} and Koch, Nadirashvili, Seregin  and Sverak \cite{KNSS} extended the result of \cite{CSTY1} and in particular, excluded the possibility of type I singularities of $v$. See also a local version by Seregin and  Sverak \cite{SS} and various extensions by Pan \cite{Pan}. We also mention that in \cite{LZ}, Lei and Zhang excluded the possibility of singularities under $v^re_r + v^ze_z \in L^\infty([0, T], {\rm BMO}^{-1})$ based on an observation in \cite{LZ-2}. This solves the regularity problem of $L^\infty([0, T], {\rm BMO}^{-1})$ solutions of Navier-Stokes equations in the axi-symmetric case. Moreover, it extends the result of \cite{CSTY1} and \cite{KNSS} since the assumptions on the axial component of velocity $|v^z| \leq C_\ast r^{-1}$ itself implies $v^re_r + v^ze_z \in L^\infty([0, T], {\rm BMO}^{-1})$ (see \cite{LNZ} for details).

Let us also mention that Neustupa and Pokorny \cite{NP}
proved that the regularity of one component (either $v^r$ or
$v^\theta$) implies regularity of the other components of the
solution. Also proving regularity is the work of Jiu  and   Xin
\cite{JX} under an assumption of sufficiently small zero-dimension
scaled norms.  See more refined results in \cite{NP-2} and the work of Ping Zhang and Ting Zhang \cite{ZZ}. Chae and  Lee \cite{CL} also proved regularity results
assuming finiteness of another certain zero-dimensional integral.  Tian and Xin \cite{TX} constructed a family of
singular axially symmetric solutions with singular initial data.

The remainder of the paper is simply organized as follows. In  section 2 we
recall two basic lemmas and prove Corollary \ref{cor} by assuming the validity of Theorem \ref{thm}. Then we prove Theorem \ref{thm} in section 3. Section 4 is devoted to the proof of Theorem \ref{SmallnessRegu}.

\section{Notations and Lemmas}

For abbreviation, we denote
$$b(t, x) = v^r e_r + v^ze_z.$$
The last equation in \eqref{ANS} shows that $b$ is divergence-free. The Laplacian operator $\Delta$ and the gradient operator $\nabla$ in the cylindrical coordinate
are
 \begin{align*}
 \Delta = \partial^2_r +
\frac{1}{r}\partial_r + \frac{1}{r^2}\partial^2_\theta
 + \partial^2_z,\quad
 \nabla = e_r\partial_r +\frac{e_\theta}{r}\partial_\theta + e_z\partial_z.
\end{align*}
For scalar axi-symmetric function $f(t, r, z)$, we often use the commutation property:
$$\nabla\partial_rf(r, z) = \partial_r\nabla f(r, z).$$
Throughout the proof, we will denote
$$\|f\|_{L^2}^2 = \int|f|^2rdrdz,\quad dx = rdrdz.$$

The following estimate will be used very often. It was originally proved in \cite{HLL} in the periodic case and then extended to the general case in \cite{Lei-mhd} (see (4.5)-(4.6) there by noting the relation $v^r = - \partial_z\psi^\theta$). Alternatively, one may also use the magic formula given by Miao and Zheng in \cite{MZ} to prove it.
\begin{lem}\label{Lem1}
Let $v^r$ be the radial component of the velocity field and $\Omega = \omega^\theta/r$. Then there exists an absolute positive constant $K_0 > 0$ such that
\begin{equation}\nonumber
\|\nabla\frac{v^r}{r}\|_{L^2} \leq K_0\|\Omega\|_{L^2},\quad \|\nabla^2\frac{v^r}{r}\|_{L^2} \leq K_0\|\partial_z\Omega\|_{L^2}.
\end{equation}
\end{lem}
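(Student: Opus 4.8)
The plan is to recast the Biot--Savart relation between $v^r$ and $\omega^\theta$ as a Poisson equation in one extra dimension and then invoke weighted Calder\'on--Zygmund estimates. First I would introduce the axisymmetric stream function $\psi^\theta$ for the poloidal part of the flow, so that $v^r=-\partial_z\psi^\theta$ (the relation already pointed to in the excerpt) and $\omega^\theta=-\big(\Delta-\tfrac1{r^2}\big)\psi^\theta$. Writing $\psi_1:=\psi^\theta/r$ and $u_1:=v^r/r$, a short computation gives $u_1=-\partial_z\psi_1$ together with
$$-\Big(\partial_r^2+\tfrac3r\partial_r+\partial_z^2\Big)\psi_1=\Omega .$$
The operator on the left is exactly the Laplacian acting on functions on $\mathbb{R}^5=\mathbb{R}^4_{\bar x}\times\mathbb{R}_z$ that depend only on $\rho:=|\bar x|$ and $z$. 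Extending $\psi_1,u_1,\Omega$ to such functions of $(\rho,z)$ on $\mathbb{R}^5$, we obtain $-\Delta_{\mathbb{R}^5}\psi_1=\Omega$ and hence $-\Delta_{\mathbb{R}^5}u_1=-\partial_z\Omega$; since for a smooth finite-energy solution $\Omega$ and its derivatives decay, uniqueness of decaying solutions of Poisson's equation yields $u_1=-\partial_z(-\Delta_{\mathbb{R}^5})^{-1}\Omega=-(-\Delta_{\mathbb{R}^5})^{-1}\partial_z\Omega$.

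Second I would set up a dictionary between the cylindrical $L^2$ of the statement and weighted $L^2$ on $\mathbb{R}^5$. For an axisymmetric scalar $f$ one has $\|f\|_{L^2}^2=\int|f|^2\,r\,dr\,dz$ comparable to $\int_{\mathbb{R}^5}|f|^2\rho^{-2}\,dx$, and likewise $\|\nabla f\|_{L^2}^2$ comparable to $\int_{\mathbb{R}^5}|\nabla_{\mathbb{R}^5}f|^2\rho^{-2}\,dx$ because the two gradients have equal length; for the Hessian one has the one-sided pointwise bound $|\nabla^2_{\mathbb{R}^3}f|\le|\nabla^2_{\mathbb{R}^5}f|$, the only discrepancy being the angular Hessian term, which is $r^{-2}|\partial_r f|^2$ in $\mathbb{R}^3$ but $3\rho^{-2}|\partial_\rho f|^2$ in $\mathbb{R}^5$. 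Thus it is enough to prove
$$\big\|\nabla_{\mathbb{R}^5}\partial_z(-\Delta_{\mathbb{R}^5})^{-1}\Omega\big\|_{L^2(\rho^{-2}dx)}\lesssim\|\Omega\|_{L^2(\rho^{-2}dx)},\qquad \big\|\nabla^2_{\mathbb{R}^5}(-\Delta_{\mathbb{R}^5})^{-1}\partial_z\Omega\big\|_{L^2(\rho^{-2}dx)}\lesssim\|\partial_z\Omega\|_{L^2(\rho^{-2}dx)} .$$

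Both operators $\nabla\partial_z(-\Delta)^{-1}$ and $\nabla^2(-\Delta)^{-1}$ on $\mathbb{R}^5$ are matrices of second-order Riesz transforms $R_iR_j$, hence Calder\'on--Zygmund operators, and the weight $\rho^{-2}=|\bar x|^{-2}$ is a Muckenhoupt $A_2$ weight on $\mathbb{R}^5$ (for $|\bar x|^{\alpha}$ on $\mathbb{R}^4\times\mathbb{R}$ the $A_2$ range is $-4<\alpha<4$, which contains $\alpha=-2$). The weighted Calder\'on--Zygmund inequality then yields the two displayed bounds, and combining them with the norm dictionary gives $\|\nabla(v^r/r)\|_{L^2}\le K_0\|\Omega\|_{L^2}$ and $\|\nabla^2(v^r/r)\|_{L^2}\le K_0\|\partial_z\Omega\|_{L^2}$ with an absolute constant $K_0$. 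As an alternative to the abstract weighted theory, one may instead substitute the stream-function representation into the explicit axisymmetric Biot--Savart kernel (the ``magic formula'' of Miao--Zheng \cite{MZ}) and estimate the resulting singular integral directly.

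The heart of the matter, and the main obstacle, is the singular weight $\rho^{-2}$ at the symmetry axis: it is precisely what separates the $L^2$-norm appearing in the lemma (the three-dimensional measure $r\,dr\,dz$) from the $L^2$-norm natural to the Poisson equation above (the five-dimensional measure $\rho^3\,d\rho\,dz$), and the whole estimate hinges on this weight being admissible for Calder\'on--Zygmund operators; a secondary point is to check that the lower-order $r^{-1}\partial_r$ term produced when the $\mathbb{R}^3$ Hessian is written in cylindrical coordinates is genuinely controlled, which is exactly the content of the pointwise comparison $|\nabla^2_{\mathbb{R}^3}f|\le|\nabla^2_{\mathbb{R}^5}f|$ used above.
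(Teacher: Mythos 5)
Your argument is correct, and its starting point is exactly the reduction on which the paper relies: note that the paper does not prove Lemma \ref{Lem1} in the text but refers to \cite{HLL} (periodic case), to (4.5)--(4.6) of \cite{Lei-mhd} via $v^r=-\partial_z\psi^\theta$, and to the Miao--Zheng formula \cite{MZ} as an alternative. Like those proofs, you pass to $\psi_1=\psi^\theta/r$, $u_1=v^r/r$ and use that $\partial_r^2+\tfrac{3}{r}\partial_r+\partial_z^2$ is the Laplacian of $\mathbb{R}^5$ acting on functions of $(\rho,z)$, so that $(\Delta+\tfrac{2}{r}\partial_r)u_1=\partial_z\Omega$. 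Where you genuinely differ is the concluding step: the cited proofs finish by direct integration by parts against the measure $r\,dr\,dz$ (for instance, testing the equation for $u_1$ with $u_1\,r\,dr\,dz$ gives $\|\nabla u_1\|_{L^2}^2+\int|u_1(t,0,z)|^2dz\le\|\Omega\|_{L^2}\|\nabla u_1\|_{L^2}$, i.e.\ the first bound with constant $1$), or by estimating the explicit axisymmetric Biot--Savart kernel; this is elementary, yields explicit constants, and never needs the representation $u_1=-\partial_z(-\Delta_{\mathbb{R}^5})^{-1}\Omega$. You instead invoke weighted Calder\'on--Zygmund theory for the second-order Riesz transforms with the weight $\rho^{-2}$ on $\mathbb{R}^5$, which is a correct and rather efficient way to absorb the mismatch between the three-dimensional measure $r\,dr\,dz$ and the five-dimensional one $\rho^{3}d\rho\,dz$: your identification of this weight as the crux, the verification that $\rho^{-2}$ (a power of the distance to a line of codimension $4$ in $\mathbb{R}^5$, admissible range $-4<\alpha<4$) is $A_2$, and the pointwise comparison $|\nabla^2_{\mathbb{R}^3}f|\le|\nabla^2_{\mathbb{R}^5}f|$ that disposes of the angular Hessian entry $\tfrac1r\partial_rf$ are all accurate, and the resulting constant is indeed absolute. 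Two small points would deserve a sentence in a full write-up: the extension of $u_1,\psi_1,\Omega$ to $\mathbb{R}^5$ is smooth because these quantities are even in $r$ for a smooth axisymmetric field, and the inversion $u_1=-\partial_z(-\Delta_{\mathbb{R}^5})^{-1}\Omega$ requires justifying decay (or a Liouville argument) for $\psi_1$ -- a step one can bypass entirely by running the energy/integration-by-parts version, which is what the references do.
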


The following lemma gives the uniform decay estimate for the angular component of vorticity in
$r$ direction for large $r$.  We point out that a weaker estimate for
$\omega^\theta$ has appeared in \cite{CL}. Even though we don't need to use  the estimate for $\omega^r$ and $\omega^z$ in this paper, we will still include them below for possible future use.
\begin{lem}\label{decay}
Suppose that $v_0 \in L^2$ is an axially symmetric divergence-free
vector and $(r\omega^r_0, r^2\omega^\theta_0, r\omega^z_0) \in L^2$.
Then the smooth solution of the Navier-Stokes equation with initial
data $v_0$ satisfies the following \textit{a priori} estimates:
\begin{eqnarray}\label{vorticity-1}
&&\sup_{0 \leq t < T}\big(\|r\omega^r(t, \cdot)\|_{L^2}^2,
  \|r\omega^z(t, \cdot)\|_{L^2}^2\big) + \int_0^T
  \big(\|\nabla[r\omega^r(t, \cdot)]\|_{L^2}^2,
  \|\nabla[r\omega^z(t, \cdot)]\|_{L^2}^2\big)dt\\\nonumber
&&\quad\quad\quad\quad \leq \|r\omega^r_0\|_{L^2}^2 + \|r\omega^z_0\|_{L^2}^2 +
4(\|\Gamma_0\|_{L^\infty}^2 + 1)\|v_0\|_{L^2}^2,
\end{eqnarray}
and
\begin{eqnarray}\label{vorticity-2}
&&\quad\quad\quad\quad\quad \sup_{0 \leq t < T}\|r^2\omega^\theta(t, \cdot)\|_{L^2}^2 +
  \int_0^T\|\nabla(r^2\omega^\theta)\|_{L^2}^2dt\\\nonumber
&&\leq C_0\Big(\|r^2\omega^\theta_0\|_{L^2}^2 + \big(
  \|v_0\|_{L^2}^4 + \|\Gamma_0\|_{L^3}^2\big)\|v_0\|_{L^2}^2\Big)
  \exp\Big\{\frac{T}{\|\Gamma_0\|_{L^3}^2 +
  \|v_0\|_{L^2}^4}\Big\},
\end{eqnarray}
where $C_0$ is a generic positive constant.
\end{lem}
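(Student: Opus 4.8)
The plan is to derive \eqref{vorticity-1} from a second-order energy estimate for $\Gamma$, and \eqref{vorticity-2} from a weighted $L^2$ estimate for $r^2\omega^\theta$ closed by Gronwall's inequality. The starting point for \eqref{vorticity-1} is that, since $\Gamma=rv^\theta$ and $\omega^r=-\partial_zv^\theta$, $\omega^z=\frac1r\partial_r(rv^\theta)$, one has the pointwise identities $r\omega^r=-\partial_z\Gamma$ and $r\omega^z=\partial_r\Gamma$; hence $\|r\omega^r\|_{L^2}^2+\|r\omega^z\|_{L^2}^2=\|\nabla\Gamma\|_{L^2}^2$, and (commuting $\nabla$ past $\partial_r,\partial_z$) $\|\nabla(r\omega^r)\|_{L^2}^2+\|\nabla(r\omega^z)\|_{L^2}^2=\|\Delta\Gamma\|_{L^2}^2-\|\omega^z\|_{L^2}^2\le\|\Delta\Gamma\|_{L^2}^2$, the last equality because $\int(\Delta\Gamma)^2\,dx=\int|\nabla^2\Gamma|^2\,dx$ while the Hessian of an axi-symmetric scalar carries the extra diagonal entry $\frac1r\partial_r\Gamma=\omega^z$. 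So it is enough to bound $\sup_{t}\|\nabla\Gamma\|_{L^2}^2+\int_0^T\|\Delta\Gamma\|_{L^2}^2\,dt$, which I obtain by testing \eqref{Max} against $-\Delta\Gamma$.

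Testing \eqref{Max} against $-\Delta\Gamma$ in $L^2(dx)$ gives $\frac12\frac{d}{dt}\|\nabla\Gamma\|_{L^2}^2+\|\Delta\Gamma\|_{L^2}^2=\int(b\cdot\nabla\Gamma)\Delta\Gamma\,dx+\int\frac2r\partial_r\Gamma\,\Delta\Gamma\,dx$. The last integral, after integrating by parts in $r$ and $z$ (all boundary terms vanish, since $\Gamma\sim r^2$ near the axis and decays at infinity), equals $2\int\frac{(\partial_r\Gamma)^2}{r^2}\,dx=2\|\omega^z\|_{L^2}^2\ge0$, which is harmless and time-integrable because $\|\omega^z\|_{L^2}\le\|\omega\|_{L^2}=\|\nabla v\|_{L^2}$ and $\int_0^T\|\nabla v\|_{L^2}^2\,dt\le\frac12\|v_0\|_{L^2}^2$ by \eqref{NI}. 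For the drift term the crucial point is that $b$ is divergence-free: two integrations by parts on $\mathbb{R}^3$ give $\int(b\cdot\nabla\Gamma)\Delta\Gamma\,dx=-\int(\partial_ib_j)(\partial_i\Gamma)(\partial_j\Gamma)\,dx=\int\Gamma\,(\partial_ib_j)(\partial_i\partial_j\Gamma)\,dx$, so that $\big|\int(b\cdot\nabla\Gamma)\Delta\Gamma\,dx\big|\le\|\Gamma\|_{L^\infty}\|\nabla b\|_{L^2}\|\nabla^2\Gamma\|_{L^2}=\|\Gamma\|_{L^\infty}\|\nabla b\|_{L^2}\|\Delta\Gamma\|_{L^2}$. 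The maximum principle \eqref{MaxP} bounds $\|\Gamma\|_{L^\infty}$ by $\|\Gamma_0\|_{L^\infty}$, and $\|\nabla b\|_{L^2}\le\|\nabla v\|_{L^2}$; absorbing $\tfrac12\|\Delta\Gamma\|_{L^2}^2$ and integrating in time against \eqref{NI} yields \eqref{vorticity-1}. This closes with no Gronwall loss: the critical bound $\|\Gamma\|_{L^\infty}\le\|\Gamma_0\|_{L^\infty}$ alone controls the poloidal vortex stretching.

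For \eqref{vorticity-2} I work directly with $W:=r^2\omega^\theta$. Rewriting the angular vorticity equation $\partial_t\omega^\theta+b\cdot\nabla\omega^\theta=(\Delta-\frac1{r^2})\omega^\theta+\frac{v^r}{r}\omega^\theta+\partial_z\frac{(v^\theta)^2}{r}$ in terms of $W$ produces $\partial_tW+b\cdot\nabla W=\mathcal LW+\frac{3v^r}{r}W-2\Gamma\omega^r$, where $\mathcal L$ is a second-order elliptic operator with $\int W\,\mathcal LW\,dx=-\|\nabla W\|_{L^2}^2+3\int\frac{W^2}{r^2}\,dx$, and the forcing $r^2\partial_z\frac{(v^\theta)^2}{r}$ has been rewritten as $-2\Gamma\omega^r$ via $\omega^r=-\partial_zv^\theta$. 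Testing against $W$ in $L^2(dx)$, the transport term drops ($\nabla\cdot b=0$), and on the right one estimates: $3\int\frac{W^2}{r^2}\,dx=3\int\omega^\theta W\,dx\le 3\|\omega^\theta\|_{L^2}\|W\|_{L^2}$ (a time-integrable source $\|\omega^\theta\|_{L^2}^2\le\|\nabla v\|_{L^2}^2$ plus a Gronwall term $\|W\|_{L^2}^2$); $2\big|\int\Gamma\omega^rW\,dx\big|\le 2\|\Gamma\|_{L^3}\|\omega^r\|_{L^2}\|W\|_{L^6}\lesssim\|\Gamma_0\|_{L^3}\|\nabla v\|_{L^2}\|\nabla W\|_{L^2}$ (using $\|\Gamma\|_{L^3}\le\|\Gamma_0\|_{L^3}$ and $\dot H^1\hookrightarrow L^6$), absorbed into $\|\nabla W\|_{L^2}^2$; and the vortex-stretching term $3\int\frac{v^r}{r}W^2\,dx$, which I control by Hölder, by Lemma~\ref{Lem1} (to bound $v^r/r$ through the vorticity) and by Gagliardo--Nirenberg, absorbing $\varepsilon\|\nabla W\|_{L^2}^2$ at the cost of a coefficient $c(t)$ in front of $\|W\|_{L^2}^2$. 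Gronwall's inequality then gives $\sup_t\|W\|_{L^2}^2+\int_0^T\|\nabla W\|_{L^2}^2\,dt$ controlled by $\|W_0\|_{L^2}^2$ plus the time-integrable source terms, times $\exp\!\big(\int_0^Tc(t)\,dt\big)$; tracking constants via \eqref{NI} and $\|\Gamma\|_{L^3}\le\|\Gamma_0\|_{L^3}$ gives the stated bound.

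The main obstacle is the vortex-stretching term $\frac{3v^r}{r}W$ in the $W$-equation: in contrast to the poloidal case, there is no pointwise a priori bound available to absorb it without a Gronwall loss, which is precisely why \eqref{vorticity-2} carries an exponential-in-$T$ factor whereas \eqref{vorticity-1} does not. A secondary technical point is the justification of the integrations by parts near the axis $r=0$; this is routine once one records the behavior $v^\theta,\omega^\theta\sim r$, $\Gamma\sim r^2$, $W\sim r^3$ near the axis, forced by smoothness and axial symmetry.
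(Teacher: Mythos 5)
Your proof of \eqref{vorticity-1} is correct and takes a genuinely different route from the paper's. The paper performs weighted energy estimates directly on the $\omega^r$ and $\omega^z$ equations in \eqref{NS-vorticity-2} and then closes via the interpolation $\|\nabla\Gamma\|_{L^4}^2\le 3\|\Gamma\|_{L^\infty}\|\Delta\Gamma\|_{L^2}$; you instead test \eqref{Max} against $-\Delta\Gamma$, use $r\omega^r=-\partial_z\Gamma$, $r\omega^z=\partial_r\Gamma$, and control the drift term by the divergence-free double integration by parts, giving $\|\Gamma\|_{L^\infty}\|\nabla b\|_{L^2}\|\Delta\Gamma\|_{L^2}$. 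Both arguments use only \eqref{MaxP} and \eqref{NI}, and your version avoids the $L^4$ step; the only care needed is the justification of the boundary terms at $r=0$ and at infinity (which you note), since they underlie both $\int(\Delta\Gamma)^2\,dx=\int|\nabla^2\Gamma|^2\,dx$ and the computation $\int\frac{2}{r}\partial_r\Gamma\,\Delta\Gamma\,dx=2\|\omega^z\|_{L^2}^2$.

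For \eqref{vorticity-2} your setup (the equation for $W=r^2\omega^\theta$, the identity $\int W\,\mathcal{L}W\,dx=-\|\nabla W\|_{L^2}^2+3\int W^2/r^2\,dx$, and the $L^3\times L^2\times L^6$ treatment of $-2\Gamma\omega^r$) coincides with the paper's, but your handling of the stretching term $3\int\frac{v^r}{r}W^2\,dx$ has a genuine gap: you propose to bound $v^r/r$ via Lemma \ref{Lem1}, whose right-hand side is $\|\Omega\|_{L^2}$ (or $\|\partial_z\Omega\|_{L^2}$) with $\Omega=\omega^\theta/r$. Under the hypotheses of this lemma there is no a priori control of these quantities --- obtaining control of $\Omega$ is precisely the content of Theorem \ref{thm}, and only under the FBC --- so your Gronwall coefficient $c(t)$ is not a priori integrable in time in terms of the data, and the argument does not close to the stated unconditional bound. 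The term must be handled with the energy bound alone, as the paper does: write $3\int\frac{v^r}{r}W^2\,dx=3\int v^r\,|W|^{3/2}|\omega^\theta|^{1/2}\,\mathrm{sgn}$-wise $\lesssim\|v^r\|_{L^2}\|W\|_{L^6}^{3/2}\|\omega^\theta\|_{L^2}^{1/2}\le\tfrac14\|\nabla W\|_{L^2}^2+C\|v_0\|_{L^2}^4\|\omega^\theta\|_{L^2}^2$, which is a time-integrable source by \eqref{NI}, not a Gronwall term. A secondary point: bounding $3\int\omega^\theta W\,dx$ by $\tfrac32\|\omega^\theta\|_{L^2}^2+\tfrac32\|W\|_{L^2}^2$ only yields a factor $e^{CT}$; to reproduce the stated factor $\exp\{T/(\|\Gamma_0\|_{L^3}^2+\|v_0\|_{L^2}^4)\}$ and the stated prefactor you must split this term with the parameter $R^2=(\|\Gamma_0\|_{L^3}+\|v_0\|_{L^2}^2)^2$ (equivalently, choose the Young parameter $R^{-2}$), as in the paper.
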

\begin{proof}
First of all, let us recall that
\begin{equation}\label{NS-vorticity-2}
\begin{cases}
\partial_t\omega^r + b\cdot\nabla \omega^r - \partial_rv^r\omega^r =
  \big(\Delta - \frac{1}{r^2}\big)\omega^r + \partial_zv^r\omega^z,\\[-4mm]\\
\partial_t\omega^\theta + b\cdot\nabla \omega^\theta - \frac{v^r}{r}\omega^\theta =
  \big(\Delta - \frac{1}{r^2}\big)\omega^\theta + \partial_z\frac{(v^\theta)^2}{r},\\[-4mm]\\
\partial_t\omega^z + b\cdot\nabla \omega^z - \partial_zv^z\omega^z =
\Delta \omega^z + \partial_rv^z\omega^r.
\end{cases}
\end{equation}
Let us first prove \eqref{vorticity-1}. Take the $L^2$ inner product
of $\omega^r$ equation with $r^2\omega^r$ and $\omega^z$ equation
with $r^2\omega^z$ in \eqref{NS-vorticity-2}, we have
\begin{eqnarray}\nonumber
&&\frac{1}{2}\frac{d}{dt}\int(\omega^r)^2r^2dx - \int r^2
  \omega^r\big(\Delta - \frac{1}{r^2}\big)\omega^r dx\\\nonumber
&&= - \int r^2\omega^rb\cdot\nabla \omega^rdx + \int r^2
  \partial_rv^r(\omega^r)^2dx + \int
  r^2\omega^r\partial_zv^r\omega^zdx
\end{eqnarray}
and
\begin{eqnarray}\nonumber
&&\frac{1}{2}\frac{d}{dt}\int(\omega^z)^2r^2dx - \int r^2
  \omega^z\Delta\omega^z dx\\\nonumber
&&= - \int r^2\omega^zb\cdot\nabla \omega^zdx + \int r^2
  \partial_zv^z(\omega^z)^2dx + \int
  r^2\omega^z\partial_rv^z\omega^rdx
\end{eqnarray}
Using integration by parts, we have
\begin{eqnarray}\nonumber
- \int r^2\omega^r\big(\Delta - \frac{1}{r^2}\big)\omega^r
  dx = \int|\nabla (r\omega^r)|^2dx
\end{eqnarray}
and
\begin{eqnarray}\nonumber
- \int r^2\omega^z\Delta\omega^zdx = \int|\nabla (r\omega^z)|^2dx -
2\int|\omega^z|^2dx.
\end{eqnarray}
Using the integration by parts and the incompressibility constraint, one has
\begin{eqnarray}\nonumber
&&- \int r^2\omega^rb\cdot\nabla \omega^rdx - \int r^2
  \omega^zb\cdot\nabla \omega^zdx\\\nonumber
&&\quad +\ \int r^2\partial_rv^r(\omega^r)^2dx + \int r^2
  \partial_zv^z(\omega^z)^2dx\\\nonumber
&&= \int (rv^r + r^2\partial_rv^r)(\omega^r)^2 + (rv^r +
  r^2\partial_zv^z)(\omega^z)^2 dx\\\nonumber
&&= - \int[\partial_zv^z(r\omega^r)^2 + \partial_rv^r
  (r\omega^z)^2]dx.
\end{eqnarray}

Consequently, we have
\begin{eqnarray}\nonumber
&&\frac{1}{2}\frac{d}{dt}\int[(\omega^r)^2 + (\omega^z)^2]r^2dx +
  \int \big(|\nabla (r\omega^r)|^2 + |\nabla
  (r\omega^z)|^2\big)dx\\\nonumber
&&= \int|\omega^z|^2dx  - \int[\partial_zv^z(r\omega^r)^2 +
  \partial_rv^r(r\omega^z)^2]dx\\\nonumber
&&\quad +\ \int \big(r^2\omega^r\partial_zv^r\omega^z + r^2
  \omega^z\partial_rv^z\omega^r\big)dx\\\nonumber
&&\leq 2\int|\omega^z|^2dx  + \|\nabla b\|_{L^2}
  \big(\|r\omega^r\|_{L^4}^2 + \|r\omega^z\|_{L^4}^2\big).
\end{eqnarray}
Note that by Gagliardo-Nirenberg's inequality and the maximum
principle $\|\Gamma\|_{L^\infty} \leq \|\Gamma_0\|_{L^\infty}$, one has
\begin{eqnarray}\nonumber
&&\|r\omega^r\|_{L^4}^2 + \|r\omega^z\|_{L^4}^2 =
  \|\nabla\Gamma\|_{L^4}^2\\\nonumber
&&\quad\quad\quad = \big(\int- \Gamma\nabla\cdot(\nabla\Gamma |\nabla\Gamma|^2)dx\big)^{\frac{1}{2}} \leq 3\|\Gamma\|_{L^\infty}\|\Delta\Gamma\|_{L^2}\\\nonumber
&&\quad\quad\quad\leq 3\|\Gamma_0\|_{L^\infty}\big(\|\partial_r(r\omega^z)\|_{L^2}
  + \|\partial_z(r\omega^r)\|_{L^2} + \|\omega^z\|_{L^2}\big).
\end{eqnarray}
Hence, by Holder inequality, we have
\begin{eqnarray}\nonumber
&&\frac{d}{dt}\int[(\omega^r)^2 + (\omega^z)^2]r^2dx +
  \int \big(|\nabla (r\omega^r)|^2 + |\nabla
  (r\omega^z)|^2\big)dx\\\nonumber
&&\quad\quad\quad\quad\quad\quad\leq 4(\|\Gamma_0\|_{L^\infty}^2 + 1)\|\nabla
  b\|_{L^2}^2.
\end{eqnarray}
Integrating the above differential inequality with respect to time
and recalling the basic energy estimate, one gets
\eqref{vorticity-1}.

Next, let us prove \eqref{vorticity-2}. Let us first write the
equation of $\omega^\theta$ in \eqref{NS-vorticity-2} as:
\begin{equation}\nonumber
\partial_t(r^2\omega^\theta) + b\cdot\nabla(r^2\omega^\theta) -
3rv^r\omega^\theta = \frac{\partial_z\Gamma^2}{r} +
\Delta(r^2\omega^\theta) - \frac{4}{r}\partial_r(r^2\omega^\theta) +
3\omega^\theta.
\end{equation}
The standard energy estimate gives that
\begin{eqnarray}\nonumber
&&\frac{1}{2}\frac{d}{dt}\|r^2\omega^\theta\|_{L^2}^2 +
  \|\nabla(r^2\omega^\theta)\|_{L^2}^2\\\nonumber
&&= 3\int rv^r\omega^\theta r^2\omega^\theta dx +
  \int\partial_z\Gamma^2 r\omega^\theta dx + 3\int
  \omega^\theta r^2\omega^\theta dx.
\end{eqnarray}
It is easy to estimate that
\begin{eqnarray}\nonumber
\int\partial_z\Gamma^2 r\omega^\theta dx \leq
2\|\Gamma\|_{L^3}\|\nabla v^\theta\|_{L^2}\|r^2\omega^\theta \|_{L^6}
\leq 4\|\Gamma_0\|_{L^3}^2\|\nabla v^\theta\|_{L^2}^2 +
\frac{1}{4}\|\nabla(r^2\omega^\theta)\|_{L^2}^2.
\end{eqnarray}
Next, one also has
\begin{eqnarray}\nonumber
\int \omega^\theta r^2\omega^\theta dx &\leq&
  \|\omega^\theta\|_{L^2(r \leq R(t))}^2R^2(t) + \|r^2\omega^\theta\|_{L^2(r
  > R(t))}^2R^{-2}(t)\\\nonumber
&\leq& \|\omega^\theta\|_{L^2}^2R^2(t) +
  \|r^2\omega^\theta\|_{L^2}^2R^{-2}(t).
\end{eqnarray}
Finally, we estimate that
\begin{eqnarray}\nonumber
\int rv^r\omega^\theta r^2\omega^\theta dx &\leq&
  \|v^r\|_{L^2}\|(r^2\omega^\theta)^{\frac{3}{2}}\|_{L^4}
  \|(\omega^\theta)^{\frac{1}{2}}\|_{L^4}\\\nonumber
&\leq& \|v_0\|_{L^2}^4\|\omega^\theta\|_{L^2}^2
  + \frac{1}{4}\|\nabla(r^2\omega^\theta)\|_{L^2}^2.
\end{eqnarray}

By taking $R(t) = \|\Gamma_0\|_{L^3} + \|u_0\|_{L^2}^2$, we
arrive at
\begin{eqnarray}\nonumber
&&\frac{d}{dt}\|r^2\omega^\theta\|_{L^2}^2 +
\|\nabla(r^2\omega^\theta)\|_{L^2}^2 \lesssim (\|\Gamma_0\|_{L^3}^2
  + \|v_0\|_{L^2}^4)\|\nabla v\|_{L^2}^2\\\nonumber
&&\quad\quad\quad\quad\quad\  +\ \big(\|\Gamma_0\|_{L^3}^2 + \|u_0\|_{L^2}^4\big)^{-1}\|r^2\omega^\theta\|_{L^2}^2.
\end{eqnarray}
Clearly, \eqref{vorticity-2} follows by basic energy estimate and
applying Gronwall inequality to the above differential inequality.
\end{proof}

At last, let us prove Corollary \ref{cor} by using Theorem \ref{thm}.
\begin{proof}
It suffices to check the validity of FBC in \eqref{FBC-1}-\eqref{FBC-2} under the assumptions in Corollary. Let $\delta_0 \in (0, \frac{1}{2})$ and $C_1 > 1$ be arbitrarily large. Noting $\|\Gamma_0\|_{L^\infty} < \infty$ and using the maximum principle, we have
$$\|\Gamma(t, \cdot)\|_{L^\infty} \leq \|\Gamma_0\|_{L^\infty}.$$
Take a smooth cut-off function of $r$ such that
$$\phi \equiv 1\ {\rm if}\ 0 \leq r \leq 1,\quad \phi \equiv 0\ {\rm if}\ r \geq 2.$$

For all $\delta < \frac{\delta_0}{2}$, using \eqref{CD}, one has
\begin{eqnarray}\nonumber
\int\frac{|v^\theta|}{r}|\phi(\frac{r}{\delta}) f|^2rdrdz \leq \int\frac{C_1}{r^2|\ln r|^{2}}|\phi(\frac{r}{\delta}) f|^2rdrdz.
\end{eqnarray}
Using integration by parts, one has
\begin{eqnarray}\nonumber
&&\int\frac{1}{r^2|\ln r|^{2}}|\phi(\frac{r}{\delta}) f|^2rdrdz\\\nonumber
&&= \int|\phi(\frac{r}{\delta}) f|^2d|\ln r|^{-1}dz\\\nonumber
&&= \int|\ln r|^{-1}\phi(\frac{r}{\delta}) f\partial_r[\phi(\frac{r}{\delta}) f]drdz\\\nonumber
&&\leq \frac{1}{2}\int\frac{1}{r^2|\ln r|^{2}}|\phi(\frac{r}{\delta}) f|^2rdrdz + \frac{1}{2}\int\big|\partial_r[\phi(\frac{r}{\delta}) f]\big|^2rdrdz.
\end{eqnarray}
Hence, we have
\begin{eqnarray}\nonumber
\int\frac{1}{r^2|\ln r|^{2}}|\phi(\frac{r}{\delta}) f|^2rdrdz \leq \int\big|\partial_r[\phi(\frac{r}{\delta}) f]\big|^2rdrdz,
\end{eqnarray}
which further gives that
\begin{eqnarray}\nonumber
\int\frac{|v^\theta|}{r}|\phi(\frac{r}{\delta}) f|^2rdrdz \leq C_1\int\big|\partial_r[\phi(\frac{r}{\delta}) f]\big|^2rdrdz.
\end{eqnarray}
On the other hand, it is easy to see that
\begin{eqnarray}\nonumber
\int\frac{|v^\theta|}{r}\big|[1 - \phi(\frac{r}{\delta})] f\big|^2rdrdz \leq \|\Gamma_0\|_{L^\infty}\delta^{-2}\int\big|[1 - \phi(\frac{r}{\delta})] f\big|^2rdrdz.
\end{eqnarray}
Consequently, we have
\begin{eqnarray}\label{2-1}
&&\int\frac{|v^\theta|}{r}|f|^2rdrdz \leq 2C_1\int\big|\partial_r[\phi(\frac{r}{\delta}) f]\big|^2rdrdz\\\nonumber
&&\quad +\ 2\|\Gamma_0\|_{L^\infty}\delta^{-2}\int\big|[1 - \phi(\frac{r}{\delta})] f\big|^2rdrdz\\\nonumber
&&\leq 4C_1\int|\partial_rf|^2rdrdz +  C\delta^{-2}\int_{r \geq \delta}|f|^2rdrdz.
\end{eqnarray}
Here and in the next inequality we use $C$ to
denote a generic positive constant whose meaning may change from line to line and which may depend on $\|\Gamma_0\|_{L^\infty}$ and $C_1$.

Next, using \eqref{2-1}, we have
\begin{eqnarray}\label{2-2}
&&\int|v^\theta|^2|f|^2rdrdz \leq 2\int|rv^\theta|\frac{|v^\theta|}{r}|\phi(\frac{r}{\delta})f|^2rdrdz\\\nonumber
&&\quad +\ 2\int|rv^\theta|^2r^{-2}\big|[1 - \phi(\frac{r}{\delta})]f\big|^2rdrdz\\\nonumber
&&\leq 2C_1|\ln \delta|^{-2}\int\frac{|v^\theta|}{r}|\phi(\frac{r}{\delta})f|^2rdrdz +
 2\|\Gamma_0\|_{L^\infty}^2\delta^{-2}\int_{r \geq \delta}|f|^2rdrdz\\\nonumber
&&\leq C\delta^{-2}\int_{r \geq \delta}|f|^2rdrdz +\ 8C_1^2|\ln \delta|^{-2}\int|\partial_rf|^2rdrdz.
\end{eqnarray}
Hence, one may choose $\delta$ small enough so that $16C_1^2|\ln \delta|^{-2} \leq \delta_\ast$ and choose $C_\ast = 4C_1$. Then it is clear from \eqref{2-1} and \eqref{2-2} that the assumptions in \eqref{FBC-1}-\eqref{FBC-2} are satisfied. Using Theorem \ref{thm}, one concludes that $v$ is smooth for all $t > 0$.
\end{proof}

\section{Criticality of Axis-Symmetric Navier-Stokes Equations}

\begin{proof}[Proof of Theorem \ref{thm}]
First of all, for initial data $v_0 \in H^{\frac{1}{2}}$, by the classical results of Leray \cite{Leray} and Fujita-Kato \cite{FK}, there exists a unique local strong solution $v$ to the Navier-Stokes equations \eqref{ANS}. Moreover, $v(t, \cdot) \in H^s$ for any $s \geq 0$ at least on a short time interval $[\epsilon, 2\epsilon]$. In particular, $\nabla\omega(t, \cdot) \in L^2$ at least on a short time interval $[\epsilon, 2\epsilon]$.
A consequence is that $\nabla \omega^r$, $\nabla \omega^\theta$, $\nabla \omega^z$, $\frac{\omega^r}{r}$, $\frac{\omega^\theta}{r}$ are all $L^2$-functions. In particular, recalling that
$$J = \frac{\omega^r}{r},\quad \Omega = \frac{\omega^\theta}{r},$$
one has
$J(t, \cdot) \in L^2,\quad \Omega(t, \cdot) \in L^2$ for $t \in [\epsilon, 2\epsilon]$. Inductively, one also has $J(t, \cdot) \in H^2$ and $\Omega(t, \cdot) \in H^2$. Without loss of generality, we may assume that
$$J_0 \in H^2,\quad \Omega_0 \in H^2.$$
Otherwise we may start from $t = \epsilon$. As long as the solution is still smooth, one has
$$\|J(t, \cdot)\|_{L^2} + \|\Omega(t, \cdot)\|_{L^2} < \infty,\quad \|\nabla J(t, \cdot)\|_{L^2}^2 + \|\nabla \Omega(t, \cdot)\|_{L^2}^2  < \infty$$
and
$$\int_{-\infty}^\infty\big(|J(t, 0, z)|^2 + |\Omega(t, 0, z)|^2\big)dz \lesssim \|J(t, \cdot)\|_{H^2}^2 + \|\Omega(t, \cdot)\|_{H^2}^2 < \infty.$$
So all calculations are legal below as long as the solution is still smooth. Our task is to derive certain strong enough \textit{a priori} estimate.

By applying standard energy estimate to $J$ equation, we have
\begin{eqnarray}\nonumber
\frac{1}{2}\frac{d}{dt}\|J\|_{L^2}^2 &=& - \int J(b\cdot\nabla) Jrdrdz + \int J(\Delta  + \frac{2}{r}\partial_r)J \\\nonumber
&&+\ \int J(\omega^r\partial_r + \omega^z\partial_z)\frac{v^r}{r}rdrdz.
\end{eqnarray}
Using the incompressibility constraint, one has
\begin{eqnarray}\nonumber
- \int J(b\cdot\nabla) Jrdrdz = \frac{1}{2}\int J^2\nabla\cdot b rdrdz = 0.
\end{eqnarray}
On the other hand, by direct calculations, one has
\begin{eqnarray}\nonumber
\int J(\Delta  + \frac{2}{r}\partial_r)J = - \|\nabla J\|_{L^2}^2 - \int |J(t, 0, z)|^2dz.
\end{eqnarray}
Consequently, we have
\begin{eqnarray}\label{E3}
&&\frac{1}{2}\frac{d}{dt}\|J\|_{L^2}^2
+ \|\nabla J\|_{L^2}^2 +\ \int_{-\infty}^\infty|J(t, 0, z)|^2dz\\\nonumber
&&\quad\quad\quad = \int J(\omega^r\partial_r + \omega^z\partial_z)\frac{v^r}{r}rdrdz
\end{eqnarray}
Similarly, by applying the energy estimate to the equation of $\Omega$, one obtains that
\begin{eqnarray}\label{E4}
&&\frac{1}{2}\frac{d}{dt}\|\Omega\|_{L^2}^2
+ \|\nabla \Omega\|_{L^2}^2 + \int_{-\infty}^\infty|\Omega(t, 0, z)|^2dz\\\nonumber
&&\quad\quad\quad\quad\quad =  - 2\int\frac{v^\theta}
  {r}J\Omega rdrdz.
\end{eqnarray}

In the remaining part of the proof of Theorem \ref{thm}, we will use $C$ to
denote a generic positive constant whose meaning may change from line to line and which may depend on $\|\Gamma_0\|_{L^\infty}$, $C_0$, $C_\ast$ and $r_0$.  Using $\|\Gamma\|_{L^\infty} \leq \|\Gamma_0\|_{L^\infty}$ and the Form Boundedness Condition in \eqref{FBC-1}, one has
\begin{eqnarray}\label{E1}
\Big|\int\frac{v^\theta}
  {r}J\Omega rdrdz\Big| &\leq& \frac{1}{4C_\ast}\int\big|\frac{v^\theta}
  {r}\big|\Omega^2 rdrdz + C_\ast\int\big|\frac{v^\theta}
  {r}\big| J^2 rdrdz\\\nonumber
&\leq& \frac{1}{4}\int|\partial_r\Omega|^2rdrdz + C_\ast^2\int|\partial_rJ|^2rdrdz\\\nonumber
&&+\ C\int_{r \geq r_0}(|J|^2 + |\Omega|^2)rdrdz.
\end{eqnarray}
Inserting \eqref{E1} into \eqref{E4}, one has
\begin{eqnarray}\label{E5}
&&\frac{d}{dt}\|\Omega\|_{L^2}^2
+ \|\nabla \Omega\|_{L^2}^2 + 2\int_{-\infty}^\infty|\Omega(t, 0, z)|^2dz\\\nonumber
&&\leq 2C_\ast^2\|\nabla J\|_{L^2}^2 + C\|\omega\|_{L^2}^2.
\end{eqnarray}

Next, we estimate that
\begin{eqnarray}\nonumber
&&\Big|\int J(\omega^r\partial_r + \omega^z\partial_z)\frac{v^r}{r}dx\Big|\\\nonumber
&&= \Big|\int [\nabla\times(v^\theta e_\theta)]\cdot(J\nabla\frac{v^r}{r})dx\Big|\\\nonumber
&&\leq \|\nabla J\|_{L^2}\|v^\theta\nabla\frac{v^r}{r}\|_{L^2}.
\end{eqnarray}
Again, using the Form Boundedness Condition in \eqref{FBC-2}, one has
$$\|v^\theta\nabla\frac{v^r}{r}\|_{L^2}^2 \leq \delta_\ast\|\partial_r\nabla\frac{v^r}{r}\|_{L^2}^2 + C_0\int_{r \geq r_0}|\nabla\frac{v^r}{r}|^2drdrdz.$$
Using Lemma \ref{Lem1} and the identity
$$\nabla\frac{v^r}{r} = \frac{\nabla v^r}{r} - e_r\frac{v^r}{r^2} = \frac{\nabla v^r}{r} + e_r\frac{\partial_rv^r + \partial_zv^z}{r},$$ we have
\begin{eqnarray}\label{E2}
&&\Big|\int J(\omega^r\partial_r + \omega^z\partial_z)\frac{v^r}{r}rdrdz\Big|\\\nonumber
&&\leq \frac{1}{2}\|\nabla J\|_{L^2}^2 + \frac{\delta_\ast}{2}\|\partial_r\nabla\frac{v^r}{r}\|_{L^2}^2 + \frac{C_0}{2}\int_{r \geq r_0}|\nabla\frac{v^r}{r}|^2drdrdz\\\nonumber
&&\leq \frac{1}{2}\|\nabla J\|_{L^2}^2 + \frac{K_0\delta_\ast}{2}\|\partial_z\Omega\|_{L^2}^2 + C\int|\nabla v|^2drdrdz\\\nonumber
\end{eqnarray}
Inserting \eqref{E2} into \eqref{E3}, we have
\begin{eqnarray}\label{E6}
&&\frac{d}{dt}\|J\|_{L^2}^2
+ \|\nabla J\|_{L^2}^2 + 2\int_{-\infty}^\infty|J(t, 0, z)|^2dz\\\nonumber
&&\leq K_0\delta_\ast\|\partial_z\Omega\|_{L^2}^2 + C\int|\nabla v|^2drdrdz.
\end{eqnarray}

Multiplying \eqref{E6} by $3C_\ast^2$ and then adding it up with \eqref{E5}, we have
\begin{eqnarray}\nonumber
&&\frac{d}{dt}\big(3C_\ast^2\|J\|_{L^2}^2 + \|\Omega\|_{L^2}^2
\big)
+ \big(C_\ast^2\|\nabla J\|_{L^2}^2 + \|\nabla \Omega\|_{L^2}^2
\big)\\\nonumber
&&\quad +\ \int_{-\infty}^\infty\big(6C_\ast^2|J(t, 0, z)|^2 + 2|\Omega(t, 0, z)|^2\big)dz\\\nonumber
&&\leq K_0\delta_\ast\|\nabla\Omega\|_{L^2}^2 + C\int|\nabla v|^2rdr
dz.
\end{eqnarray}
Integrating the above inequality with respect to time, we have
\begin{eqnarray}\nonumber
&&3C_\ast^2\|J(t, \cdot)\|_{L^2}^2 + \|\Omega(t, \cdot)\|_{L^2}^2
+ \int_0^t\big(C_\ast^2\|\nabla J\|_{L^2}^2 + \|\nabla \Omega\|_{L^2}^2
\big)ds\\\nonumber
&&\quad +\ \int_0^t\int_{-\infty}^\infty\big(6C_\ast^2|J(t, 0, z)|^2 + 2|\Omega(t, 0, z)|^2\big)dzds\\\nonumber
&&\leq CC_\ast^2(\|J_0\|_{L^2}^2 + \|\Omega_0\|_{L^2}^2) + K_0\delta_\ast\int_0^t\|\nabla\Omega\|_{L^2}^2ds  +\ C\|v_0\|_{L^2}.
\end{eqnarray}
Here we used the basic energy identity \eqref{NI}. Recall that $K_0$ is an absolute positive constant determined in Lemma \ref{Lem1}. Hence, we may take $\delta_\ast$ so that
$K_0\delta_\ast < \frac{1}{2}$.
Consequently, we have
\begin{eqnarray}\label{E8}
&&\sup_{0 \leq t < T}\big(\|J(t)\|_{L^2}^2 + \|\Omega(t)\|_{L^2}^2
\big)
+ \int_0^T\big(\|\nabla J\|_{L^2}^2 + \|\nabla \Omega\|_{L^2}^2
\big)dt\\\nonumber
&&\quad +\ \int_0^T\int_{-\infty}^\infty\big(|J(t, 0, z)|^2 + |\Omega(t, 0, z)|^2\big)dzdt < \infty
\end{eqnarray}
for all $T < \infty$.

Clearly, the \textit{a priori} estimate \eqref{E8} and Sobolev imbedding theorem imply that
\begin{equation}\label{7-3}
\sup_{0 \leq t < T}\|b(t, \cdot)\|_{L^p(r \leq 1)} < \infty,\ \ 3 \leq p \leq 6,\quad T < \infty.
\end{equation}

\begin{rem}
There are several ways to prove regularity of $v$ from here. For instance, the easiest way is just to use the result in \cite{NP}.
If one would like to assume further decay properties on the initial data so that the conditions in Lemma \ref{decay} are satisfied, then one can easily derive an $L^\infty([0, T], L^p)$ estimate for $b$ when $r \geq 1$ and $3 \leq p \leq 6$, by using the basic energy estimate and Sobolev imbedding theorem. This combining the local $L^p$ estimate of $v$ in \eqref{7-3} immediate gives that $b \in L^\infty([0, T], L^3)$ for any $T < \infty$. Using Sobolev imbedding theorem once more, one has $b \in L^\infty([0, T], {\rm BMO}^{-1})$. Then  the result in \cite{LZ} implies that $v$ is regular up to time $T$.
\end{rem}

Let us give an alternative and simple self-contained proof below. We first derive an $L^4$ \textit{a priori} estimate for $v^\theta$ without using the result of \cite{LZ} and Lemma \ref{decay}. Using the equation of $v^\theta$ in \eqref{ANS} and standard energy estimate, one has
\begin{eqnarray}\nonumber
&&\frac{d}{dt}\|v^\theta\|_{L^4}^4 + \|\nabla(v^\theta)^2\|_{L^2}^2 + \big\|r^{-1}(v^\theta)^2\big\|_{L^2}^2\\\nonumber
&&\leq C\Big|\int\frac{v^r(v^\theta)^4}{r}rdrdz\Big|\\\nonumber
&&\leq C\|r^{-1}v^r\|_{L^\infty}\|v^\theta\|_{L^4}^4.
\end{eqnarray}
Hence, by using Lemma \ref{Lem1} and three dimensional interpolation inequality
$\|f\|_{L^\infty}^2 \lesssim \|\nabla f\|_{L^2}\|\nabla^2
f\|_{L^2}$, one has
\begin{eqnarray}\nonumber
\big\|\frac{v^r}{r} \big\|_{L^{\infty}} &\lesssim&
\big\|\nabla\partial_z\frac{\psi^\theta}{r}\big\|_{L^2}
^{\frac{1}{2}}\big\|\nabla^2\partial_z\frac{\psi^\theta}{r}\big\|_{L^2}
^{\frac{1}{2}}\\\nonumber
&=& \big\|\nabla\frac{v^r}{r}\big\|_{L^2}
^{\frac{1}{2}}\big\|\nabla^2\frac{v^r}{r}\big\|_{L^2}
^{\frac{1}{2}} \lesssim
\|\Omega\|_{L^2}^{\frac{1}{2}}\|\partial_z\Omega\|_{L^2}
^{\frac{1}{2}}.
\end{eqnarray}
and \eqref{E8}, one concludes from Gronwall's inequality that
$$\|v^\theta\|_{L^4} < \infty,\quad \int_0^T\big\|r^{-1}(v^\theta)^2\big\|_{L^2}^2dt < \infty,\quad 0 \leq t \leq T.$$
Then we use the equation of $\omega^\theta$ to derive that
\begin{eqnarray}\nonumber
&&\frac{d}{dt}\|\omega^\theta\|_{L^2}^2 + 2\|\nabla\omega^\theta\|_{L^2}^2 + 2\|r^{-1}\omega^\theta\|_{L^2}^2\\\nonumber
&&= - \int\frac{v^r}{r}(\omega^\theta)^2rdrdz + \int\omega^\theta\partial_z\frac{(v^\theta)^2}{r}rdrdz\\\nonumber
&&\leq C\|r^{-1}v^r\|_{L^\infty}\|\omega^\theta\|_{L^2}^2 + \|\partial_z\omega^\theta\|_{L^2}^2 + \frac{1}{4}\big\|r^{-1}(v^\theta)^2\big\|_{L^2}^2.
\end{eqnarray}
Hence, Gronwall's inequality similarly gives that
$$\omega^\theta \in L^2,\quad T < \infty.$$
By basic energy identity \eqref{NI} and Sobolev imbedding, one has
$$b \in L^p,\quad 2 \leq p \leq 6.$$
Hence, $v \in L^\infty_T(L^4_x)$. So Serrin type criterion implies that $v$ is regular up to time $T$.

\end{proof}

\section{Small $\|\Gamma_0\|_{L^\infty}$ or $\|\Gamma(t ,\cdot)\|_{L^\infty(r \leq r_0)}$ Global Regularity}

This section is devoted to proving Theorem \ref{SmallnessRegu}.
\begin{proof}[Proof of Theorem \ref{SmallnessRegu}]
Recall that
$$V = \frac{v^\theta}{\sqrt{r}},\quad \Omega =
\frac{\omega^\theta}{r}.$$
Let us first formulate the axi-symmetric Navier-Stokes equations \eqref{ANS} in terms of $V$ and $\Omega$ as follows:
\begin{equation}\label{WeiEqn}
\begin{cases}
\partial_tV + b\cdot\nabla V + \frac{3v^r}{2r}V
  = \big(\Delta + \frac{1}{r}\partial_r - \frac{3}{4r^2}\big)V,\\[-4mm]\\
\partial_t\Omega + b\cdot\nabla \Omega
  = \big(\Delta + \frac{2}{r}\partial_r \big)\Omega + \frac{2\partial_zV^2}{r}.
\end{cases}
\end{equation}
By energy estimate, one has
\begin{equation}\label{7-1}
\frac{1}{2}\frac{d}{dt}\|\Omega\|_{L^2}^2 + \|\nabla\Omega\|_{L^2}^2
\leq \frac{1}{2}\|\partial_z\Omega\|_{L^2}^2 +
\frac{1}{2}\big\|r^{-1}|V|^2\big\|_{L^2}^2.
\end{equation}

Let us first prove the global regularity under
$$\|\Gamma_0\|_{L^\infty} \leq \delta M_0^{-1}.$$
Using Lemma \ref{Lem1}, one has
\begin{eqnarray}\nonumber
\big\|\frac{v^r}{r} \big\|_{L^{\infty}}  \lesssim
\|\Omega\|_{L^2}^{\frac{1}{2}}\|\partial_z\Omega\|_{L^2}
^{\frac{1}{2}}.
\end{eqnarray}
Noting that
\begin{eqnarray}\nonumber
\|V\|_{L^4}^4 \lesssim
\big\|r^{-1}|V|^2\big\|_{L^2}^{\frac{3}{2}}\|\Gamma\|_{L^4},
\end{eqnarray}
one can apply $L^4$ energy estimate for $V$ to get
\begin{eqnarray}\label{7-2}
&&\frac{d}{dt}\big\||V|^2\big\|_{L^2}^2 + \big\|\nabla
  |V|^2\big\|_{L^2}^2 + \big\|r^{-1}|V|^2\big\|_{L^2}^2\\\nonumber
&&\lesssim \big\|\frac{v^r}{r} \big\|_{L^{\infty}}
  \|V\|_{L^4}^4 \lesssim \|\Omega\|_{L^2}^{\frac{1}{2}}
  \|\partial_z\Omega\|_{L^2}^{\frac{1}{2}}\big\|r^{-1}|V|^2
  \big\|_{L^2}^{\frac{3}{2}}\|\Gamma\|_{L^4}\\\nonumber
&&\lesssim \|\Omega\|_{L^2}^{\frac{1}{2}}\|\Gamma\|_{
  L^2}^{\frac{1}{2}}\|\Gamma\|_{L^\infty}^{\frac{1}{2}}
  \big(\|\partial_z\Omega\|_{L^2}^2+ \big\|r^{-1}
  |V|^2\big\|_{L^2}^2\big).
\end{eqnarray}

Combining \eqref{7-1} and \eqref{7-2}, we arrive at
\begin{eqnarray}\label{S7-7}
&&\frac{d}{dt}\Big(\big\||V|^2\big\|_{L^2}^2 +
  \|\Omega\|_{L^2}^2\Big) + \Big(\big\|\nabla|V|^2
  \big\|_{L^2}^2 + \|\nabla\Omega\|_{L^2}^2\Big)\\\nonumber
&&\lesssim \|\Omega\|_{L^2}^{\frac{1}{2}}\|\Gamma\|_{
  L^2}^{\frac{1}{2}}\|\Gamma\|_{L^\infty}^{\frac{1}{2}}
  \big(\|\partial_z\Omega\|_{L^2}^2+ \big\|r^{-1}
  |V|^2\big\|_{L^2}^2\big).
\end{eqnarray}
Recall that we have the following \textit{a priori} estimate:
\begin{equation}\nonumber
\|\Gamma\|_{L^2} \leq \|\Gamma_0\|_{L^2},\quad \|\Gamma\|_{L^\infty}
\leq \|\Gamma_0\|_{L^\infty}.
\end{equation}
Hence, under the condition of the theorem, there exists $T > 0$ such
that
$$\big\||V|^2\big\|_{L^2}^2 +
\|\Omega\|_{L^2}^2 < 2\big\||V_0|^2\big\|_{L^2}^2 +
2\|\Omega_0\|_{L^2}^2,\quad \forall\ \ 0 \leq t < T.$$ If  $\delta$ is a suitably small positive constant and
$$\|\Gamma_0\|_{L^\infty} \leq \delta M_0^{-1}$$
is satisfied, then we have
\begin{eqnarray}\nonumber
\|\Omega\|_{L^2} ^{\frac{1}{2}}\|\Gamma\|_{L^2}^{
  \frac{1}{2}}\|\Gamma\|_{L^\infty}^{\frac{1}{2}}
\lesssim M_0^{\frac{1}{2}}(\delta M_0^{- 1})^{\frac{1}{2}} \lesssim
  \delta^{\frac{1}{2}},\quad \forall\ \ 0 \leq t < T,
\end{eqnarray}
Hence, by \eqref{S7-7}, we derive that
\begin{eqnarray}\nonumber
\frac{d}{dt}\Big(\big\||V|^2\big\|_{L^2}^2 +
  \|\Omega\|_{L^2}^2\Big) \leq 0, \quad \forall\ \ 0 \leq t <
  T,
\end{eqnarray}
which implies that
\begin{eqnarray}\nonumber
\big\||V|^2\big\|_{L^2}^2 + \|\Omega\|_{L^2}^2 \leq
\big\||V_0|^2\big\|_{L^2}^2 + \|\Omega_0\|_{L^2}^2, \quad 0 \leq t
\leq T.
\end{eqnarray}
The above argument implies, by the standard continuation method  that
\begin{eqnarray}\nonumber
\big\||V|^2\big\|_{L^2}^2 + \|\Omega\|_{L^2}^2 \leq
\big\||V_0|^2\big\|_{L^2}^2 + \|\Omega_0\|_{L^2}^2, \quad \forall\ \
t \geq 0.
\end{eqnarray}
Hence the proof for first part of the theorem is finished.

On the other hand, if
$$\|\Gamma(t, \cdot)\|_{L^\infty(r \leq r_0)} \leq \delta M_1^{-1}$$
is satisfied, then  one may treat \eqref{7-2} as follows:
\begin{eqnarray}\nonumber
&&\frac{d}{dt}\big\||V|^2\big\|_{L^2}^2 + \big\|\nabla
  |V|^2\big\|_{L^2}^2 + \big\|r^{-1}|V|^2\big\|_{L^2}^2\\\nonumber
&&\lesssim \big\|\frac{v^r}{r} \big\|_{L^{\infty}}
  \|V\|_{L^4(r \leq r_0)}^4 + \int_{r \geq r_0}\big|\frac{v^r}{r}\frac{(v^\theta)^4}{r^2}\big|rdrdz\\\nonumber
&&\lesssim \|\Omega\|_{L^2}^{\frac{1}{2}}\|\Gamma\|_{
  L^2}^{\frac{1}{2}}\|\Gamma\|_{L^\infty(r \leq r_0)}^{\frac{1}{2}}
  \big(\|\partial_z\Omega\|_{L^2}^2+ \big\|r^{-1}
  |V|^2\big\|_{L^2}^2\big)\\\nonumber
&&\quad +\ r_0^{-4}\big\|\frac{v^r}{r}\big\|_{L^2}\big\|\frac{v^\theta}{r}\big\|_{L^2}\|\Gamma\|_{L^\infty(r \geq r_0)}^3,
\end{eqnarray}
which combining \eqref{7-1} gives that
\begin{eqnarray}\nonumber
&&\big\||V(t, \cdot)|^2\big\|_{L^2}^2 +
  \|\Omega(t, \cdot)\|_{L^2}^2 + \int_0^t\Big(\big\|\nabla|V|^2
  \big\|_{L^2}^2 + \|\nabla\Omega\|_{L^2}^2\Big)ds\\\nonumber
&&\lesssim \big\||V_0|^2\big\|_{L^2}^2 +
  \|\Omega_0\|_{L^2}^2 + r_0^{-4}\|v_0\|_{L^2}^2\|\Gamma_0\|_{L^\infty}^3\\\nonumber
&&\quad +\ \big(\|\Gamma\|_{L^\infty(r \leq r_0)}\|\Gamma_0\|_{
  L^2}\sup_{0 \leq s \leq t}\|\Omega(s, \cdot)\|_{L^2}\big)^{\frac{1}{2}}
  \big(\|\partial_z\Omega\|_{L^2}^2+ \big\|r^{-1}
  |V|^2\big\|_{L^2}^2\big).
\end{eqnarray}
Here $r_0 > 0$ is arbitrary.  Then similar continuation arguments as the proof used in the first part implies that if $\delta$ is a suitable small absolute positive constant, then the solution $v$ is regular. Here $M_1$ is given in the statement of Theorem \ref{SmallnessRegu}. This shows that the smallness of $\Gamma$ locally in $r$ implies the regularity of the solutions.
\end{proof}

\begin{rem}\nonumber
The constant $M_0$ in Theorem \ref{SmallnessRegu} is dimensionless.
Indeed, $\big\||V_0|^2\big\|_{L^2}$ and $\|\Omega_0\|_{L^2}$ have
dimension $- \frac{3}{2}$, and $\|\Gamma_0\|_{L^2}$ has dimension
$\frac{3}{2}$. Hence, the dimension of $M_0$ is 0. Similarly, one can also check that $M_1$ is also dimensionless if one assigns $r_0$ dimension 1.
\end{rem}

\section*{Acknowledgement}
The authors would like to thank Prof. Daoyuan Fang, Dr. Bin Han, Mr. Xinghong Pan and Prof. Ting Zhang for discussions. Z. Lei was in part
supported by NSFC (grant No.11171072, 11421061 and 11222107), 
Shanghai Shu Guang project, Shanghai Talent Development Fund and SGST 09DZ2272900. Qi S. Zhang is grateful of the supports by the Siyuan Foundation through Nanjing University and by the Simons foundation.

\end{document}